\newtheorem{teo}{Theorem}[section]
\newtheorem{lema}[teo]{Lemma}
\newtheorem{cor}[teo]{Corollary}
\begin{document}

\title[quasilinear heat equation with a localized reaction]{Grow-up for a quasilinear heat equation with a localized reaction}

\author{R. Ferreira and A. de Pablo}

\address{Ra\'{u}l Ferreira
\hfill\break\indent  Departamento de Matem\'{a}ticas,
\hfill\break\indent U.~Complutense de Madrid,
\hfill\break\indent
 28040 Madrid, Spain.
\hfill\break\indent  e-mail: {\tt raul$_-$ferreira@ucm.es}}

\address{Arturo de Pablo
\hfill\break\indent  Departamento de Matem\'{a}ticas,
\hfill\break\indent U. Carlos III de Madrid,
\hfill\break\indent
 28911 Legan\'{e}s, Spain.
\hfill\break\indent  e-mail: {\tt arturop@math.uc3m.es}}

\maketitle

\

\begin{abstract}
We study the behaviour of  global solutions to the quasilinear heat equation  with a reaction localized
$$
u_t=(u^m)_{xx}+a(x) u^p,
$$
$m, p>0$ and $a(x)$ being the characteristic function of an interval. we prove that there exists $p_0=\max\{1,\frac{m+1}2\}$ such that all global solution are bounded if $p>p_0$, while for $p\le p_0$ all the solution are global and unbounded. In the last case, we  prove that if $p<m$ the grow-up rate is different  to the one obtained when $a(x)\equiv1$, while if $p>m$ the grow-up rate coincides with  that rate, but only inside the support of $a$; outside the interval the rate is smaller.
\end{abstract}

%%%%%%%%%%%%%%%%%%%%%%%%%%%%%%%%%%%%%%%%%%%%%%%%

\section{Introduction}

\label{sect-introduction} \setcounter{equation}{0}

We consider non-negative solutions to the following problem
\begin{equation}\label{eq.principal}
\left\{
\begin{array}{ll}
u_t=(u^m)_{xx}+a(x) u^p\qquad & x\in \mathbb R,\;t>0,\\
u(x,0)=u_0(x)
\end{array}\right.
\end{equation}
with $m,\,p>0$. The initial datum is a continuous, nonnegative and nontrivial  function $u_0\in L^1(\mathbb{R})\cap L^\infty(\mathbb{R})$. The reaction coefficient is a characteristic function of an interval, which without loss of generality can be assumed symmetric, $I=I_L=(-L,L)$, $a(x)=\ell\,\mathds{1}_{I}(x)$. The parameters $L,\,\ell>0$ can be eliminated by a simple rescaling when $p\ne m$. On the other hand, if $p=m$, putting $\ell=1$ means changing $L$ into $L/\sqrt\ell$. We therefore assume $\ell=1$, $0<L<\infty$.

The existence of a solution to problem~\eqref{eq.principal}, local in time, can be easily achieved. But uniqueness is subtle when $p<1$, due to the  non-Lipschitz character of the reaction,  see~\cite{AguirreEscobedo,dePabloVazquez}. Actually,  initial data that  vanish somewhere in $I$, where the reaction applies, can cause nonuniqueness, see Appendix. We therefore assume in the sequel that $u_0$ is strictly positive in the closed interval $\overline{I}$.  Finally, if $T$ is the maximal time of existence of the unique solution then the solution is bounded in $\mathbb{R}^N\times[0,t]$ for every $t<T$.

Problems like \eqref{eq.principal} are studied mainly when $p>1$ and in the context of blow-up, i.e. when  $T$ is finite, and in that case
\begin{equation}\label{bup}
\|u(\cdot,t)\|_\infty\to \infty \quad \mbox{as } t\to T.
\end{equation}
To begin with, the case with global reaction, $L=\infty$, has been described
by Fujita in the semilinear case $m=1$ in the the seminal work \cite{Fujita}, even in dimension $N\ge1$, where the equation is
$$
u_t=\Delta u+u^p.
$$
It is proved in that paper that there exist two exponents, the \emph{global existence exponent} $p_0=1$ and the \emph{Fujita exponent} $p_F=1+2/N$, such that for $0<p< p_0$ all the solutions are globally  defined in time, for $p_0<p< p_F$ all the solutions blow up, whereas for $p>p_F$ there exist both, global solutions and blowing-up solutions. The limit cases $p=p_0$ and $p=p_F$ belong, for this problem and respectively, to the global existence range and blow-up range, see also \cite{Hayakawa}. From this result several extensions have been investigated in the subsequent years, for all values of $m>0$, or with different diffusion operators and reactions; we mention the monographs \cite{GalaktionovKurdyumovMikhailovSamarski,QuittnerSouplet,SamarskiGalaktionovKurdyumovMikhailov} for equation \eqref{eq.principal} with $L=\infty$.

In the presence of a localized reaction, $L<\infty$, problem \eqref{eq.principal} has been studied, again  in the context of blow-up, in \cite{BaiZhouZheng,FerreiradePabloVazquez,Pinsky}, where the critical exponents $p_0$ and $p_F$, as defined above, have been obtained in the different situations. In dimension $N=1$ they are
\begin{equation}\label{exp-p0}
p_0=\max\{1,\dfrac{m+1}2\},\qquad
p_F=m+1.
\end{equation}

Our purpose in this work is to study the behaviour of global solutions and to characterize wether they are bounded or not. In the last case, we have that~\eqref{bup} holds with $T=\infty$, a phenomenon that is called {\it grow-up}. Our first task is to prove that if $p>p_0$ all global solution are bounded.

\begin{teo}\label{teo-bounded}
Let $p>p_0$ and $u$ be a global solution of \eqref{eq.principal}. Then, $u$ is bounded.
\end{teo}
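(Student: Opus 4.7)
My approach is a contradiction argument: assume that $u$ is a global solution but $\|u(\cdot,t_n)\|_\infty\to\infty$ along some sequence $t_n\to\infty$, and derive a finite-time blow-up. Outside $I$ the equation reduces to the porous medium equation, and the PME comparison principle applied on each connected component of $\mathbb R\setminus\bar I$ yields $\sup_{\mathbb R}u(\cdot,t)=\sup_{\bar I}u(\cdot,t)$; hence the large values $M_n:=\|u(\cdot,t_n)\|_\infty$ are attained at points $x_n\in\bar I$, and one only has to exploit the dynamics inside $I$.

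The main step is an eigenfunction (Kaplan) argument. Let $\phi_1$ be the principal Dirichlet eigenfunction of $-\partial_{xx}$ on $I$ with eigenvalue $\lambda_1=(\pi/(2L))^2$, normalized so that $\int_I\phi_1\,dx=1$, and set
$$
F(t):=\int_I u(x,t)\,\phi_1(x)\,dx.
$$
Testing \eqref{eq.principal} against $\phi_1$ and integrating by parts twice on $I$, the boundary term $-[u^m\phi_1']_{-L}^{L}$ is non-negative (because $\phi_1'(L)<0$ and $\phi_1'(-L)>0$), giving
$$
F'(t)\ \ge\ \int_I u^p\phi_1\,dx\ -\ \lambda_1\int_I u^m\phi_1\,dx.
$$
Since $\phi_1\,dx$ is a probability measure and $p>p_0\ge 1$, Jensen's inequality produces $\int_I u^p\phi_1\ge F^p$. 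When $m\le 1$, the concave Jensen gives $\int_I u^m\phi_1\le F^m$, so $F'\ge F^p-\lambda_1 F^m$; when $m>1$ and $p>m$, applying Jensen to the concave map $s\mapsto s^{m/p}$ yields $\int_I u^m\phi_1\le(\int_I u^p\phi_1)^{m/p}$, and then $F'\ge \tfrac12 F^p$ as soon as $F$ exceeds a computable threshold. Integrating this ODE gives blow-up of $F$ in finite time provided $F(t_n)$ can be made large, and the latter follows from $M_n\to\infty$ together with parabolic Hölder regularity of $u$ (whose modulus on $[0,t_n]$ depends on the a priori $L^\infty$ bound on $u$ on that interval): one obtains $u(\cdot,t_n)\ge M_n/2$ on an interval around $x_n$ of controlled positive length, hence $F(t_n)\to\infty$. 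This contradicts globalness.

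The delicate subrange is $m>1$ with $(m+1)/2<p\le m$, where both Jensen directions have the wrong sign and the Kaplan argument breaks down. I would treat this case by direct comparison with an explicit self-similar blow-up subsolution of the form
$$
\underline u(x,t)=(T-t)^{-1/(p-1)}\,G\!\bigl((x-x_0)(T-t)^{(m-p)/(2(p-1))}\bigr),
$$
with $x_0\in I$ and profile $G$ compactly supported in a small interval. The exponent condition $p>(m+1)/2$ is precisely what allows the construction of such a compactly supported profile with the PME subsolution inequality of the correct sign on its support, inside $I$. Placing $\underline u$ below $u(\cdot,t_n)$ (possible by continuity once $M_n$ is large enough) and applying comparison yields the required finite-time blow-up. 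Constructing and verifying this self-similar subsolution in the slow-diffusion regime is the principal technical difficulty of the proof.
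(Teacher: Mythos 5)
Your overall strategy (assume $u$ global and unbounded, derive finite-time blow-up) is the same as the paper's, and your preliminary observation that the running maximum is attained in $\overline I$ also appears there; but the mechanism you propose is different and has a genuine gap at its key step. The paper first discards the range $p_0<p\le p_F=m+1$, where it is already known that \emph{all} solutions blow up in finite time, so only $p>m+1$ needs an argument; there it runs Giga's rescaling argument on the Lyapunov energy $E_u$: if $E_u$ stayed bounded, the rescalings $\phi_n(y,s)=\lambda_n^{-1}u(x_n+A_ny,t_n+B_ns)$ with $A_n=\lambda_n^{(m-p)/2}$, $B_n=\lambda_n^{1-p}$ would converge (using $\int\int|u_t|^2=E_u(0)-E_u(t_n)$) to a nontrivial bounded stationary solution of $(\Phi^m)''+\Phi^p=0$, which does not exist; hence $E_u(t_0)\le0$ for some $t_0$ and the concavity argument gives finite-time blow-up, a contradiction.

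The gap in your Kaplan argument is the step ``$M_n\to\infty$ implies $F(t_n)$ large''. The Kaplan ODE only produces blow-up once $F$ exceeds a \emph{fixed} threshold of order $\lambda_1^{1/(p-m)}$, and you propose to reach it via parabolic regularity; but the regularity modulus on $[0,t_n]$ degenerates with the $L^\infty$ bound $M_n$. The intrinsic spatial scale near a point of height $M_n$ is precisely $A_n=M_n^{(m-p)/2}\to0$ (recall $p>m$ in the only range that matters, $p>m+1$), so regularity only yields $u(\cdot,t_n)\ge M_n/2$ on an interval of length $\sim M_n^{(m-p)/2}$, whence $F(t_n)\gtrsim M_n^{(2+m-p)/2}$ — and worse if $x_n\to\pm L$, where $\phi_1$ vanishes. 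For $p\ge m+2$ this exponent is nonpositive: a priori $u(\cdot,t_n)$ could be a tall thin spike with $F(t_n)$ bounded, and no contradiction follows. Upgrading ``unbounded in $L^\infty$'' to ``large in a weighted $L^1$ or energy sense'' is exactly the difficulty the paper's Giga-type rescaling is designed to circumvent. Two further remarks: your fallback subrange $(m+1)/2<p\le m$ is moot, since blow-up of all solutions for $p\le p_F$ is already in the literature; and, as stated, your compactly supported self-similar subsolution for $p<m$ has support $|x-x_0|\le \xi_0(T-t)^{-(m-p)/(2(p-1))}$, which \emph{expands} as $t\to T$ and therefore exits $I$, where the reaction (which the subsolution inequality needs) is absent.
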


Therefore, we focus our attention in the interval $p\le p_0$. Observe that when $L=\infty$, every solution with $p\le p_0=1$ has grow-up and in fact tends to infinity at every point. Next we prove that the same occurs for $L<\infty$ with the new value of $p_0$.

\begin{teo}\label{teo-GUP} If  $0<p\le p_0$ then the solution is defined for all times and $\lim\limits_{t\to\infty}u(x,t)=\infty$ uniformly in compact sets of $\mathbb{R}$.
\end{teo}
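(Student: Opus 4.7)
The plan is to combine global existence (from the critical-exponent classification of the cited references) with the non-existence of non-trivial bounded weak stationary solutions, and then to build a compactly supported stationary subsolution whose monotone evolution must diverge on every compact set. Global existence for $p\le p_0$ is part of the global-existence-exponent classification in \cite{BaiZhouZheng,FerreiradePabloVazquez,Pinsky}; the critical value $p=p_0$ can be handled by passing to the limit from $p<p_0$, or by exhibiting an explicit self-similar supersolution matching the scaling balance $u_t\sim\mathds{1}_I u^p\sim(u^m)_{xx}$ at $p_0$.

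To rule out non-trivial non-negative bounded weak solutions $U$ of $(U^m)_{xx}+\mathds{1}_I U^p=0$ on $\mathbb{R}$, note first that outside $I$ the equation $(U^m)''=0$ and boundedness force $U\equiv c_\pm$ on $(\pm L,\pm\infty)$, while continuity of the flux $(U^m)'$ at $x=\pm L$ imposes $(U^m)'(\pm L^\mp)=0$ from the inside. Setting $v=U^m$ on $I$ and multiplying $v''=-v^{p/m}$ by $v'$ produces the first integral $(v')^2=\tfrac{2m}{m+p}\bigl(c^{p+m}-v^{(p+m)/m}\bigr)$, where matching of the integration constant at both endpoints forces $c_+=c_-=:c$. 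The strict concavity $v''<0$ wherever $v>0$ is incompatible with the two-sided critical-point condition $v'(\pm L)=0$ unless $v$ is constant on $[-L,L]$; substituting back into $v''=-v^{p/m}$ yields $c=0$ and hence $U\equiv 0$.

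To construct the growing subsolution, use $u_0>0$ on $\overline{I}$ to pick $r\in(0,L)$ and $\delta>0$ with $u_0\ge\delta$ on $[-r,r]$, and take $\phi\ge 0$ supported in $[-r,r]$ with $\phi\le u_0$ as a rescaling of the positive Dirichlet solution of $(\phi^m)''+\mu\phi^p=0$ on $(-r,r)$ for some $\mu\le 1$; the rescaling $\phi\mapsto\lambda\phi$ shifts $\mu$ into $\mu\lambda^{m-p}$, and the range $p\le p_0=\max\{1,(m+1)/2\}$ leaves the freedom to arrange simultaneously $\mu\le 1$ and $\|\phi\|_\infty\le\delta$. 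Then $(\phi^m)_{xx}+\phi^p\ge 0$ weakly on $\mathbb{R}$, so if $\underline u(x,t)$ solves \eqref{eq.principal} with initial datum $\phi$, the comparison principle applied to a stationary subsolution gives $\underline u$ non-decreasing in $t$ and $u\ge\underline u$. The pointwise monotone limit $\underline u_\infty(x):=\lim_{t\to\infty}\underline u(x,t)$ is, wherever finite, itself a bounded weak stationary solution of the full problem (by passage to the limit in the weak formulation), hence $\equiv 0$ by the classification above; this contradicts $\underline u_\infty\ge\phi>0$ on $(-r,r)$, so $\underline u_\infty\equiv+\infty$ on $(-r,r)$. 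Letting $r\nearrow L$ yields $\underline u_\infty\equiv+\infty$ on all of $(-L,L)$, and then a one-sided pure-PME comparison on $\{|x|>L\}$ with boundary data $\underline u(\pm L,t)\to\infty$ (whose bounded stationary solutions are the constants matched to the boundary, attractive from below) transfers the divergence to every compact subset of $\mathbb{R}$ and delivers the claimed uniform grow-up. The hardest step is this construction of $\phi$ with simultaneously $\mu\le 1$, compact support strictly inside $I$, and $\phi\le u_0$ for an a priori arbitrary (possibly small) datum, which is exactly where the constraint $p\le p_0$ is used.
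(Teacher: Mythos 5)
There is a genuine gap in the construction of the compactly supported stationary subsolution $\phi$, and it occurs exactly in the range $m\le p\le p_0$ (nonempty whenever $m\le1$). Test the inequality $(\phi^m)''+\mathds{1}_{I}\phi^p\ge0$ against the first Dirichlet eigenfunction $\varphi_1$ of $-d^2/dx^2$ on an interval slightly larger than the support of $\phi\subset\overline I$: since $\phi$ is compactly supported there are no boundary terms, and using $\phi^p\le M^{p-m}\phi^m$ with $M=\|\phi\|_\infty$ (valid for $p\ge m$) one gets $M^{p-m}\ge\mu_1\ge(\pi/(2L))^2$. Hence for $p>m$ every nontrivial stationary subsolution supported in $\overline I$ has $M\ge(\pi/(2L))^{2/(p-m)}$, a fixed positive constant, so it cannot be placed below a datum with small $\delta=\min_{\overline I}u_0$; your own rescaling already shows this, since $\mu\mapsto\mu\lambda^{m-p}$ blows up as $\lambda\to0$ when $p>m$. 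For $p=m$ the same test forces $L\ge\pi/2$, so for small $L$ (and $L$ cannot be scaled away when $p=m$) no such subsolution exists at all, even though the theorem is claimed for every $L>0$. The dichotomy governing your construction is $p$ versus $m$, not $p$ versus $p_0$, so the assertion that ``the range $p\le p_0$ leaves the freedom to arrange simultaneously $\mu\le1$ and $\|\phi\|_\infty\le\delta$'' is false; only the case $p<m$ goes through as written.

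This is precisely why the paper does not argue via stationary subsolutions. It first proves unboundedness abstractly: if $u\le M$ everywhere, then $u$ is a supersolution of $v_t=(v^m)_{xx}+M^{p-q}a(x)v^q$ for some $q\in(p_0,p_F)$, whose solutions are known to blow up -- this is where the hypothesis $p\le p_0$ genuinely enters. It then upgrades unboundedness to $u(0,t)\to\infty$ using the decreasing Lyapunov functional \eqref{eq.energia} together with the nonexistence of nontrivial stationary states (your ODE classification of bounded stationary solutions is a correct and more elementary substitute for that ingredient, and your final pure-diffusion spreading step outside $I$ parallels the paper's Polubarinova--Kochina subsolutions). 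To repair your plan you would need to insert such an ``unboundedness first'' step, so that comparison can start from large data inside $I$ rather than from the initial datum itself.
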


\begin{figure}[!ht]
\hspace*{-1.5cm}
\includegraphics[scale=.6]{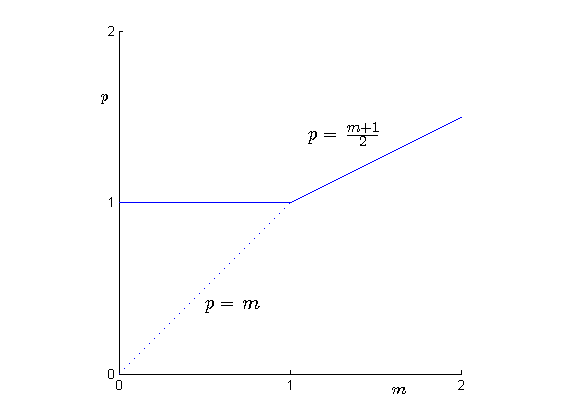}
\caption{Grow-up region.}
\label{fig.gup}
\end{figure}

We now study the grow-up rate, that is, the speed at which the solution goes to infinity. If $L=\infty$ and $0<p<p_0=1$, the grow-up is independent of $m$ as is driven by the ODE $U'=U^p$, thus giving
$$
u(x,t)\sim t^{\frac1{1-p}},\qquad x\in\mathbb{R},
$$
where by the symbol $\sim$ we mean $0<c_1\le t^{-\frac1{1-p}}u(x,t)\le c_2<\infty$. We call this the natural rate. Observe that it is clearly an upper bound for the rate in our case $L<\infty$. We next show that for problem~\eqref{eq.principal} the grow-up rate depends on the sign of $p-m$, and it coincides with the natural grow-up rate only if $m\le p<1$.

\begin{teo}
  \label{teo-rates-1}
Let $u$ be a solution to \eqref{eq.principal} with $0<p< p_0$. For $t\to\infty$
$$
    u(x,t)\sim t^\alpha,\qquad \alpha=\min\{\frac1{1-p},\frac1{m+1-2p}\},
$$
uniformly in compact subsets of $I$.  \end{teo}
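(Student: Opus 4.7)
The plan is to establish matching upper and lower bounds on $u(x,t)$ for $x$ in compact subsets of $I$ by constructing explicit self-similar sub- and super-solutions and invoking the comparison principle. A key tool is Theorem~\ref{teo-GUP}, which guarantees that at some starting time $t_0$ (which we take large), $u(\cdot,t_0)$ can be made as large as we please on any prescribed compact set. The construction splits in both directions according to $\mathrm{sgn}(p-m)$, since this determines which of the two candidates in the minimum defining $\alpha$ is active.

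If $p\ge m$ (which, since $p<p_0$, forces $m\le p<1$), the exponent is $\alpha=1/(1-p)$, the natural ODE rate. The upper bound follows from the pointwise comparison $u(x,t)\le U(t)$, where $U'=U^p$ with $U(0)=\|u_0\|_\infty$, giving $U(t)\le C(1+t)^{1/(1-p)}$. For the lower bound I would take a flat-plateau subsolution $\underline u(x,t)=v(t)\psi(x)$, with $\psi\in C_c^\infty(I)$, $\psi\equiv 1$ on some $I''\subset\subset I$, and $v$ solving $v'=v^p$. On $I''$ the diffusive term vanishes and $\underline u$ exactly satisfies the ODE, yielding the natural rate; in the transition region, the negative contribution of $v^m(\psi^m)''$ is absorbed by $v^p(\psi-\psi^p)$ for large $v$ precisely because $p\ge m$ makes $v^{p-m}\to\infty$. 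The initial ordering is arranged via Theorem~\ref{teo-GUP}.

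If $p<m$, then $\alpha=1/(m+1-2p)$ and $\gamma=(m-p)/(m+1-2p)>0$, and I would work with self-similar barriers $u_{\mathrm{ss}}(x,t)=(t+s)^\alpha\,\Phi\bigl(x(t+s)^{-\gamma}\bigr)$ chosen so that outside $I$ the pure porous medium equation is self-similar, that is, $\Phi$ solves the ODE $(\Phi^m)''+\gamma\xi\Phi'-\alpha\Phi=0$. For the lower bound, take $\Phi=G\ge 0$ an even, compactly-supported solution on $(-R,R)$ with $G(\pm R)=0$: extending by zero, the reaction term only helps, so $\underline u$ is a subsolution; choosing $s$ so that $t_0+s$ is very small makes the initial support and amplitude negligible, Theorem~\ref{teo-GUP} provides the comparison, and since $x(t+s)^{-\gamma}\to 0$ uniformly on compacts of $I$, this yields $u(x,t)\ge c(t+s)^\alpha$. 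For the upper bound take $\Phi=F$ an even, positive, decreasing solution on $(0,\infty)$ decaying at infinity with a prescribed corner at the origin, $(F^m)'(0^+)=-LF^p(0)$: this condition encodes, in similarity variables, the asymptotic strength of the reaction $(t+s)^\gamma F^p$ concentrated in $|\xi|\le L(t+s)^{-\gamma}$, whose total mass is $2LF^p(0)+o(1)$. After multiplying $F$ by a large constant so that $\bar u(\cdot,t_0)\ge u(\cdot,t_0)$, the comparison principle gives $u(x,t)\le C(t+s)^\alpha$.

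The main obstacle is the upper bound when $p<m$: no stationary supersolution on $\mathbb R$ exists and the ODE rate $1/(1-p)$ is strictly too large, so one is forced into the self-similar construction above. The delicate part is proving existence of the profile $F$ with the correct matching at the origin (via a shooting argument on the singular ODE $(\Phi^m)''+\gamma\xi\Phi'-\alpha\Phi=0$) and verifying rigorously that $\bar u$ is a supersolution for all sufficiently large $t$, which requires controlling the finite-time discrepancy between the pointwise concentrated reaction and its limiting Dirac mass.
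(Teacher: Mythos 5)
The step that fails outright is the lower bound in the subcritical case $p<m$, which is precisely where the nonstandard rate $\alpha=\frac1{m+1-2p}$ appears. You propose the barrier $\underline u(x,t)=(t+s)^\alpha G\bigl(x(t+s)^{-\gamma}\bigr)$ with $G\ge0$ even and compactly supported, solving the pure-diffusion profile equation $(G^m)''+\gamma\xi G'-\alpha G=0$, and you discard the reaction on the grounds that it ``only helps''. No such profile exists: wherever $G>0$ and $\xi G'\le0$ one has $(G^m)''=\alpha G-\gamma\xi G'\ge\alpha G>0$, so the even solution with $G'(0)=0$ increases in $|\xi|$ and never returns to zero; more decisively, $\int_{\mathbb{R}}\underline u(\cdot,t)\,dx=(t+s)^{\alpha+\gamma}\int G$ would be strictly increasing, while every compactly supported subsolution of $u_t=(u^m)_{xx}$ has non-increasing mass. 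The reaction cannot be dropped from the lower barrier: the rate $t^{\frac1{m+1-2p}}$ comes from balancing the mass produced in $I$ against outward diffusion, and the paper's subsolution is accordingly a two-piece matched function --- a parabola in $x$ inside $I$, whose concavity $-\frac KL V^p(0,t)$ is paid for by the reaction (taking $K<L$), glued $C^1$ at $x=\pm L$ to an explicit compactly supported self-similar subsolution of the porous medium equation with the influx condition $-(V^m)_x(0)=KV^p(0)$. Your upper bound for $p<m$ has a related defect, which you flag but do not resolve: concentrating the whole reaction mass as a corner at $\xi=0$ leaves $\bar u_t-(\bar u^m)_{xx}-\bar u^p=-\bar u^p<0$ pointwise for $0<|x|<L$, so $\bar u$ is not a supersolution in the distributional sense (a negative Dirac mass at the origin does not compensate a negative absolutely continuous part elsewhere). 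The paper again fixes this with the parabola-plus-profile matching, the corner sitting at $x=\pm L$ and the parabola absorbing the reaction inside $I$, now with $K>L$.

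In the range $m\le p<1$ your upper bound is the paper's ODE comparison, but the flat-plateau subsolution $v(t)\psi(x)$ with $\psi\equiv1$ on $I''$ does not close as stated. The required inequality is $v^{p-m}(\psi-\psi^p)\le(\psi^m)''$; at the junction with the plateau, writing $\psi=1-\eta$ with $\eta(0)=\eta'(0)=0$, the left-hand side is $O(\eta)$ while $(\psi^m)''\approx -m\eta''-m(1-m)(\eta')^2$ is of lower order, so the inequality fails near $\partial I''$ for every fixed $v$; a similar degeneracy occurs at the edge of $\operatorname{supp}\psi$ when $m<1$, and for $p=m$ the factor $v^{p-m}$ does not grow at all. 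This is why the paper takes a profile solving $(\phi^m)''+\phi^p=0$ exactly (rescaled to vanish at $x=\pm L$) together with $\psi'=(\psi^p-\psi^m)\varphi^{p-1}(0)$, which still yields $\psi'\sim\psi^p$; if you insist on a generic bump you must at least cap it strictly below $1$ and control $(\psi^m)''$ near the support edge, and treat $p=m$ separately.
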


For a heuristical explanation of this dependence, which was already observed in~\cite{FerreiradePabloVazquez} for the corresponding blow-up problem, we look at the rescaled function
$$
v(\xi,\tau)=t^{-\alpha} u(x,t),\qquad \xi=xt^{-\beta},\;\tau=\log t.
$$
It is a solution to the equation
\begin{equation}\label{rescaled-eq}
v_\tau=(v^m)_{\xi\xi}+\beta\xi\, v_\xi+e^{\gamma\tau}\mathds{1}_{\{|\xi|<Le^{-\beta\tau}\}}v^p-\alpha v,
\end{equation}
where
$$
\beta=\frac12(\alpha(m-1)+1),\quad \gamma=\alpha(p-1)+1.
$$
The choices $\gamma=0$ when $p>m$ and $\gamma=\beta$ when $p<m$, which give respectively $\alpha=\frac1{1-p}$ or $\alpha=\frac1{m+1-2p}$, imply that the reaction coefficient tends to 1 in the whole $\mathbb{R}$ or to  a multiple of the Dirac distribution at the origin. Thus the grow-up rate must be given by those limit problems. See Section~\ref{sect-guprate} for the details. Observe that when $p=1>m$ or $p=\frac{m+1}2<m$, we cannot perform the previous rescaling, and the argument suggests an exponential grow-up.

\begin{teo}
  \label{teo-rates-2}
Let $u$ be a solution to \eqref{eq.principal} with $p=p_0$.
\begin{enumerate}
    \item If $m<1$ then
$$
u(x,t)\sim e^t
$$
uniformly in compact subsets of $I$.
\item If $m=1$ then
$$
  \lim_{t\to\infty}\frac{\log u(x,t)}{t}=\lambda_0(L),
$$
where $0<\lambda_0(L)<1$,   uniformly in compact subsets of $\mathbb{R}$.
\item If $m>1$, there exists $\beta_*>0$ such that
$$
  \lim_{t\to\infty}\frac{\log u(x,t)}{t}=\beta_* L^2
$$
uniformly in compact subsets of $\mathbb{R}$.
\end{enumerate}
\end{teo}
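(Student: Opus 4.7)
The proof is organized case by case, each built around a tailored sub/super-solution construction combined with the parabolic comparison principle. Case~(a) demands the two-sided asymptotic $u\sim e^t$, while cases~(b) and~(c) demand only the weaker $t^{-1}\log u\to\lambda$.

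\textbf{Case (a): $m<1$, $p=p_0=1$.} The upper bound is immediate from the spatially constant supersolution $\bar u(t)=\|u_0\|_\infty\,e^t$, since $(\bar u^m)_{xx}\equiv 0$ and $\mathds{1}_I\le 1$. For the matching lower bound I use the ansatz $\underline u(x,t)=A(t)w(x)$ with $w\in C^\infty_c$ positive on its support contained in $I$ and $(w^m)''$ bounded. The subsolution condition reduces on $\mathrm{supp}(w)$ to $A'-A\le A^m(w^m)''/w$, leading to the ODE $A'=A-KA^m$ with $K=-\min_{\mathrm{supp}(w)}(w^m)''/w>0$. Setting $B=\log A$ one gets $B'=1-Ke^{(m-1)B}$; because $m-1<0$ the correction $B(t)-t$ converges to a finite limit as $t\to\infty$, so $A(t)\sim e^t$. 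After using Theorem~\ref{teo-GUP} to wait until $u(\cdot,t_0)\ge\underline u(\cdot,t_0)$, comparison and a covering argument give $u\ge c_1 e^t$ uniformly on any compact subset of $I$.

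\textbf{Case (b): $m=1$, $p=1$.} The equation $u_t=u_{xx}+\mathds{1}_I u$ is linear, so the rate is governed by the bottom of the spectrum of the Schr\"odinger operator $-\partial_x^2-\mathds{1}_I$ on $\mathbb{R}$. I would first solve the eigenvalue problem $\phi''+\mathds{1}_I\phi=\lambda_0\phi$ with $\phi>0$ and $\phi\to 0$ at $\pm\infty$: matching $\cos(\sqrt{1-\lambda_0}\,x)$ on $I$ with $e^{-\sqrt{\lambda_0}\,|x|}$ outside produces the dispersion relation $\sqrt{1-\lambda_0}\tan(\sqrt{1-\lambda_0}\,L)=\sqrt{\lambda_0}$, whose unique root $\lambda_0(L)\in(0,1)$ is the claimed exponent. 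For the upper bound $\limsup t^{-1}\log u\le\lambda_0$, standard heat-kernel estimates for a Schr\"odinger operator with bounded compactly supported potential give $u(\cdot,t_1)\le C\phi_0$ at some $t_1$, and then comparison with $Ce^{\lambda_0 t}\phi_0$ closes the argument. For $\liminf t^{-1}\log u\ge\lambda_0$, I would truncate to $(-L-N,L+N)$, noting that the principal Dirichlet eigenvalues $\lambda_0^N\nearrow\lambda_0$, and use $c\,e^{\lambda_0^N t}\phi_0^N$ (extended by zero) as a compactly supported subsolution; sending $N\to\infty$ yields the conclusion on any compact subset of~$\mathbb{R}$.

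\textbf{Case (c): $m>1$, $p=(m+1)/2$.} The structural key is the scaling invariance of~\eqref{eq.principal}: the transformation $(u,x,t)\mapsto\bigl(\epsilon^{-4/(m-1)}u,\,\epsilon x,\,\epsilon^{-2}t\bigr)$ preserves the equation and sends $L$ to $L/\epsilon$. Consequently, if the rate $\lambda(L):=\lim t^{-1}\log u(x,t)$ exists and lies in $(0,\infty)$, it must satisfy $\lambda(L)/L^2\equiv\beta_*$ for a universal constant. To produce this rate I would build an upper barrier from a Barenblatt-type profile of the porous medium equation multiplied by an exponentially growing temporal factor whose rate is dictated by the total reaction on~$I$, and a lower barrier $\underline u=F(t)W(x)$ with $W$ a compactly supported ground state of a nonlinear eigenvalue problem $(W^m)''+\mathds{1}_I W^p=\nu W$ on a slightly smaller interval, $F$ solving the resulting ODE. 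Matching the exponential rates of these barriers is the main obstacle: unlike in~(b), the naive separable ansatz $e^{\mu t}W(x)$ cannot simultaneously balance the three powers $1$, $m$, $p$, so one must work with time-dependent profiles and rely on the scaling to reduce the problem to identifying a single universal $\beta_*>0$. Finite-propagation-speed estimates for the porous medium diffusion propagate the rate from $I$ to any compact subset of $\mathbb{R}$.
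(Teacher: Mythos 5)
Your case (a) is essentially the paper's argument (flat supersolution $Me^{t}$ above, separated\mbox{-}variables subsolution $A(t)w(x)$ with compactly supported profile below), with one caveat: boundedness of $(w^m)''$ is not the right hypothesis. The computation requires a lower bound for $(w^m)''/w$ on $\mathrm{supp}\,w$, and since $w\to0$ at the edge of its support this fails for generic profiles (e.g.\ $w^m=(1-(x/L')^2)_+$ gives $(w^m)''/w\to-\infty$ there). The paper sidesteps this by taking the profile to solve $(\varphi^m)''+\varphi^p=0$, so the ratio is exactly $-1$; you would need a comparably careful choice. In case (b) your lower bound (Dirichlet truncations with $\lambda_0^N\nearrow\lambda_0$, eigenfunctions extended by zero) is sound and parallels the paper's truncation of $\varphi_{\lambda_0-\varepsilon}$. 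The upper bound, however, has a genuine gap: the claim $u(\cdot,t_1)\le C\phi_0$ is false for general $u_0\in L^1\cap L^\infty$, because $\phi_0$ decays like $e^{-\sqrt{\lambda_0}\,|x|}$ while $u_0$ (hence $u(\cdot,t_1)$) may decay only polynomially. This is precisely why the paper compares with $e^{(\lambda_0+\varepsilon)t}\varphi_{\lambda_0+\varepsilon}$, whose profile contains a positive multiple of $e^{\sqrt{\lambda_0+\varepsilon}\,r}$ and hence has a positive minimum dominating any bounded datum, and why the theorem asserts only $t^{-1}\log u\to\lambda_0$, the sharper statement \eqref{rates3} being reserved for data decaying like $\phi_0$. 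Your step is repairable (e.g.\ via the spectral bound $\|u(t)\|_{L^2}\le e^{\lambda_0 t}\|u_0\|_{L^2}$ plus interior parabolic regularity), but as written it does not hold.

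Case (c) is where the proposal falls short of a proof. The scaling identity $\lambda(L)=L^2\lambda(1)$ is correct and explains the form of the answer, but it presupposes that the limit $t^{-1}\log u\to\lambda(L)\in(0,\infty)$ exists, which is the actual content of the theorem; the barriers that would establish it are only described, and you yourself flag the ``main obstacle'' (no separable ansatz balances the powers $1$, $m$, $p$) without resolving it. In particular, for your lower barrier $F(t)W(x)$ the subsolution inequality reads $F'W\le \nu F^m W+(F^p-F^m)\mathds{1}_I W^p$, and whether this can be closed with an exponential $F$ hinges entirely on the relation between $\nu$ and $\max W$ — that is, on the existence threshold for the compactly supported profile — which is exactly the quantity that determines $\beta_*$ and which your sketch does not analyze. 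The paper's resolution is a non\mbox{-}separable construction: an exponentially self\mbox{-}similar solution of the pure porous medium equation outside $I$, $V(x,t)=e^{\lambda t}F(|x|e^{-(m-p)\lambda t})$ with the nonlinear flux condition $-(F^m)'(0)=KF^p(0)$, glued across $x=\pm L$ to a parabola\mbox{-}type profile inside $I$. After normalization the profile depends only on $\beta=\lambda/K^2$, and an energy/shooting argument identifies the critical $\beta_*$ separating positive unbounded profiles ($\beta>\beta_*$, yielding supersolutions when $K>L$) from compactly supported ones ($\beta=\beta_*$, yielding subsolutions when $K<L$); the moving similarity variable is what balances the three powers. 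This construction is the heart of the proof and is absent from your proposal.
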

In the second case, $p=m=1$, if we impose to the initial data a behaviour at infinity like $e^{-\sqrt{\lambda_0}|x|}$, then we can obtain a more precise result, namely
\begin{equation}
    \label{rates3}
    u(x,t)\sim e^{\lambda_0t}.
  \end{equation}

The fact that the reaction is confined to the interval $I$  is observed in the following result, which shows that the grow-up rate is different outside that interval provided $p>m$. We must assume for the initial datum
\begin{equation}
  \label{condicion_dato}
|x|^{2}u_0^{1-m}(x)\sim\left\{
\begin{array}{ll}
1, & p<1,\\
\log x,\qquad &p=1.
\end{array}\right.
\end{equation}

\begin{teo}\label{teo-rates-3} Let $u$ be a solution to \eqref{eq.principal} with $0<p\le p_0$.
\begin{enumerate}
\item If $p\le m$ then the rate $u(x,t)\sim t^{\frac1{m+1-2p}}$ is true in every compact set of $\mathbb{R}$.
\item If $m<p$ and $u_0$ satisfies \eqref{condicion_dato} then for $t\to\infty$ and $x\not\in I$,
$$
 u(x,t)\sim   t^{\frac1{1-m}}.
$$
  \end{enumerate}
\end{teo}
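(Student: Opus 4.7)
The plan is to treat the two regimes separately, reducing in each case to a comparison argument in the exterior $\{|x|>L\}$ where the equation is the pure porous medium equation $u_t=(u^m)_{xx}$, using the rate on $\partial I$ supplied by Theorems~\ref{teo-rates-1}--\ref{teo-rates-2} as Dirichlet data.

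\emph{Part~(1), $p\le m$.} Set $\sigma=1/(m+1-2p)$ and $b=(\sigma(m-1)+1)/2=(m-p)/(m+1-2p)\ge 0$. The upper bound is trivial: $\bar u(x,t)=C(1+t)^\sigma$ satisfies $\bar u_t\ge 0=(\bar u^m)_{xx}$, and for $C$ large dominates $u_0$ and the inside upper bound at $\partial I$. For the lower bound I use the self-similar PME subsolution
$$
\underline u(x,t)=(t+\tau_0)^\sigma F\!\left(\frac{|x|-L}{(t+\tau_0)^b}\right),
$$
whose profile $F\ge 0$ solves the classical similarity ODE $(F^m)''+b\xi F'=\sigma F$ with $F(0)>0$ and either compact support ($m\ge 1$) or algebraic decay ($m<1$). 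For $\tau_0$ large, $\underline u$ lies below $u$ on the parabolic boundary, and since $b\ge 0$ forces $(|x|-L)/(t+\tau_0)^b\to 0$ on compact sets, comparison gives $u(x,t)\gtrsim t^\sigma$ uniformly.

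\emph{Part~(2), $m<p$.} Set $\sigma=1/(1-m)$. The key object is the explicit singular separable PME profile $F(\xi)=A\xi^{-2/(1-m)}$ on $\xi>0$, which solves $(F^m)''=\sigma F$ with $A^{m-1}=(1-m)/(2m(m+1))$. Using it I build the supersolution
$$
\bar u(x,t)=(t+\tau_0)^{\sigma}F\bigl(|x|-L+\delta(t)\bigr),\qquad \delta(t)=c_0(1+t)^{-(p-m)/(2(1-p))},
$$
on $\{|x|>L\}$ (with $\delta(t)=c_0 e^{-(1-m)t/2}$ when $p=1$). The identity $\sigma-1=m\sigma$ makes the bulk terms of $\bar u_t-(\bar u^m)_{xx}$ cancel along the profile, and $F'<0$ together with $\delta'<0$ renders the remaining term $(t+\tau_0)^{\sigma}F'\delta'$ non-negative, so $\bar u$ is a PME supersolution. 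Its boundary value $A(t+\tau_0)^{\sigma}\delta(t)^{-2/(1-m)}$ is tuned to grow as $t^{1/(1-p)}$, hence dominates $u(\pm L,t)$ for $c_0$ small; hypothesis \eqref{condicion_dato} is precisely what makes $\bar u(\cdot,0)\ge u_0$ at infinity for $\tau_0$ large. Since $\delta(t)\to 0$, comparison gives $u(x,t)\le Ct^{\sigma}(|x|-L)^{-2/(1-m)}$ outside $I$. For the matching lower bound, the \emph{regular} separable solution $F_*$ of $(F_*^m)''=\sigma F_*$ with $F_*(0)=1$ and $F_*(\xi)\sim\xi^{-2/(1-m)}$ at infinity yields the PME solution $\underline u(x,t)=c(t+\tau_1)^{\sigma}F_*(|x|-L)$; the inside bound $u(\pm L,t)\gtrsim t^{1/(1-p)}\gg t^\sigma$ dominates it on $\partial I$, and the lower half of \eqref{condicion_dato} ensures $u_0\ge\underline u(\cdot,0)$ for $c$ small.

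\emph{Main obstacle.} The delicate step is the upper bound in Part~(2): one must simultaneously arrange that $\bar u$ matches the fast inside growth at $\partial I$ (forcing $\delta(t)$ to shrink at a specific rate) and that $\bar u$ remains a PME supersolution (requiring $\delta'\le 0$). The algebraic cancellation $\sigma-1=m\sigma$ is precisely what permits a time-dependent shift without destroying the supersolution property, and \eqref{condicion_dato} is exactly the decay condition on $u_0$ needed to match the initial datum at infinity.
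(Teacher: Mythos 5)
Your overall strategy---treating the exterior $\{|x|>L\}$ as a pure diffusion problem driven by the interior rate as Dirichlet data---is legitimate, and your Part~(2) construction for $p<1$ is a genuinely different and more elementary route than the paper's: where the paper builds exterior self-similar solutions $U=t^{\alpha}f(|x|t^{\beta})$ by a phase-plane analysis and reads off the decay $f(\xi)\sim\xi^{-2/(1-m)}$, you exploit the exact separable solution $A\xi^{-2/(1-m)}$ plus a shrinking shift $\delta(t)$, and the cancellation $\sigma-1=m\sigma$ does make $\bar u$ a supersolution with the right boundary growth. The paper's Part~(1) is organized differently as well: it proves the interior and exterior rates simultaneously with one matched function (a parabola in $w^m$ on $I$ glued to a self-similar solution of the exterior problem with nonlinear flux condition $-(V^m)_x=KV^p$ at $\partial I$, which is where $1/(m+1-2p)$ comes from), whereas you import the interior rate from Theorems~\ref{teo-rates-1}--\ref{teo-rates-2}. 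That is allowed, but those theorems give the rate only on compact subsets of the \emph{open} interval $I$, so your comparisons should be run in $\{|x|>L-\varepsilon\}$; moreover in your Part~(1) lower bound ``for $\tau_0$ large'' goes the wrong way (enlarging $\tau_0$ raises $\underline u$): you must instead shrink the profile via its scaling invariance and use global grow-up to shift time, and for $m<1$ either truncate the profile (the paper uses $(A-B\xi)_+^{2/m}$) or justify separately that $u(\cdot,t_1)$ dominates the algebraic tail of $\underline u(\cdot,0)$.

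The genuine gap is the upper bound in Part~(2) when $p=1$. There \eqref{condicion_dato} prescribes $u_0(x)\sim(\log|x|)^{1/(1-m)}|x|^{-2/(1-m)}$, and no constant multiple of your profile $F(\xi)=A\xi^{-2/(1-m)}$ dominates this at infinity, so the initial comparison $\bar u(\cdot,0)\ge u_0$ fails for every choice of $\tau_0$ and $c_0$, however you tune $\delta$. The logarithmic correction is not an artifact: the paper's exterior profile for $p=1$ satisfies $f(\xi)\sim(\log\xi)^{1/(1-m)}\xi^{-2/(1-m)}$ (the separatrix escaping to infinity along $X=-2/(1-m)$ in the phase plane), and that is exactly what matches the $\log$ in \eqref{condicion_dato}. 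Your pure-power ansatz cannot produce it, because inserting a logarithmic factor destroys the identity $(F^m)''=\sigma F$ on which your supersolution computation rests. To close the case $p=1$ you need either the paper's phase-plane profile or a corrected supersolution carrying the logarithm, with a fresh verification of the differential inequality.
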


The paper is organized as follows: Section~\ref{sect-bdd-notbb} deals with the question of whether the global solutions are bounded or not; in Section~\ref{sect-guprate} we study the rate at which the unbounded solutions tend to infinity. We include an appendix to comment upon the question of uniqueness of solution.

\section{Grow up}\label{sect-bdd-notbb}\setcounter{equation}{0}

By the definition of global existence exponent $p_0$, if $p<p_0$ all the solutions are global, while if $p>p_0$ there exist either blow-up  and global solutions. The value of $p_0$ is given in \eqref{exp-p0}. We prove in this section that in the lower range $p\le p_0$  the solutions are unbounded, and in fact they go to infinity  at all points. On the other hand, for $p>p_0$ all global solutions are bounded. We begin with this last result.

\begin{proof}[Proof of Theorem~\ref{teo-bounded}]
We know that if $p_0<p\le p_f$ all solution blow-up at finite time, so we are led to consider only the case $p>m+1$. In order to arrive at a contradiction we assume that $u$ is a global unbounded solution to \eqref{eq.principal}. We claim that in this case the
decreasing energy functional associated to problem \eqref{eq.principal}
\begin{equation}\label{eq.energia}
E_u(t)=\frac12\int_\mathbb{R} |(u^m)_x|^2\,dx-\frac{m}{p+m}\int_{-L}^L u^{p+m}\,dx.
\end{equation}
goes to $-\infty$ as $t\to\infty$. On the other hand, by the concavity argument it is easy to see that if $E_u(t_0)\le 0$ for some $t_0\ge0$,
then $u$ blows up in a finite time, see for instance \cite{QuittnerSouplet}. This contradiction gives us that all global solution are bounded.

In order to proof the claim, we follow the ideas of \cite{Giga}.
We first take an increasing sequence $t_n\to \infty$ such that
$$
\lambda_n=
\|u(\cdot,t_n)\|_{L^\infty(\mathbb R)}=\|u\|_{L^\infty(\mathbb R\times [0,t_n])}\,.
$$
In
this way the sequence $\lambda_n$ is nondecreasing. Also, there
exists a sequence $x_n\in I$ such that
\begin{equation}
\frac{\lambda_n}{2}\le u(x_n,t_n)\le \lambda_n\,.
\label{>12}
\end{equation}
Notice that since the reaction is localized in $I$, it is easy to see that there exists a time $t_0$ such that for $t\ge t_0$ the maximum of $u(\cdot,t)$ is reached in $I$. Assume first that, for a subsequence if necessary, $\lim_{n\to\infty} x_n=x_0\in(-L,L)$.

We define the rescaled function
$$
\phi_n(y,s)= \frac{1}{\lambda_n} u(x_n+A_ny,t_n+B_ns)\,,
$$
whit $A_n=\lambda_n^{(m-p)/2}$ and $B_n=\lambda_n^{1-p}$. Since
$p>m+1$, we have that both $A_n$ and $B_n$ go to zero as $n\to
\infty$.
The function $\phi_n$ so defined satisfies the equation
$$
(\phi_n)_s=(\phi_n^m)_{yy}+ \mathds{1}_{I_n}\phi_n^p\,,\qquad \mbox{for }
(y,s)\in\mathbb R\times (-\frac{t_n}{B_n},0)\,,
$$
where $I_n=\{y\in \mathbb{R} : x_n+A_n y\in I\}$.
Observe that the family $I_n$ expand to cover the whole $\mathbb{R}$
as $n\to \infty$. Moreover, $0\le
\phi_n\le 1$. Then, by standard regularity theory, we have that
$\phi_n(y,s)\to\Phi(y,s)$ uniformly in compact sets of
$\mathbb{R}\times (-\infty,0)$, where the function $\Phi$
satisfies the equation
\begin{equation}\label{eq-limite}
\Phi_s=(\Phi^m)_{yy}+\Phi^p\,,\qquad \mbox{in }
\mathbb{R}\times (-\infty,0)\,.
\end{equation}

On the other hand,
$$
\begin{array}{rl}
\displaystyle\int_{s_1}^{s_2}\int_{I_n} |(\phi_n)_t|^2\,dy\,ds=&
\displaystyle\frac{B_n^2}{\lambda_n^2}\int_{s_1}^{s_2}\int_{I_n}
|u_t(x_n+A_n y,t_n+B_n s)|^2\,dy\,ds \\ [.3cm]
\le & \displaystyle
\lambda_n^{- \frac{2+m+p}{2}}\int_0^{t_n}\int_{I} |u_t(x,t)|^2\,dx\,dt
\\ [.3cm]
=&\displaystyle
\lambda_n^{- \frac{2+m+p}{2}} (E_u (0)-E_u (t_n))\,.
\end{array}
$$
This implies that, if $E_u$ is
bounded, then
$$
\displaystyle\int_{s_1}^{s_2}\int_{I_n}
|(\phi_n)_t|^2\,dy\,ds\to 0
$$
for every $0<s_1<s_2<\infty$ and, therefore, the limit function
$\Phi$ does not depend on $s$. Moreover it is nonnegative and
nontrivial since $\Phi(x_0)\ge 1/2$ by (\ref{>12}). We conclude with
a contradiction since it is well know that there does not exist nontrivial nonnegative solution to $(\Phi^m)''+\Phi^p=0$.

In the case $\lim_{n\to\infty}x_n=\pm L$ we get the same conclusion since there exist no solution to $(\Phi^m)''+\mathds{1}_{I_\infty}\Phi^p=0$ neither for $I_\infty=(-\infty,0)$ nor for $I_\infty=(0,\infty)$.

This contradiction proves that the energy functional is unbounded as we wanted to show, and the proof is finished.
\end{proof}

Next we consider the case $p\le p_0$ and show that all solutions are global and unbounded in $\mathbb{R}$.

\begin{proof}[Proof of Theorem~\ref{teo-GUP}] We divide the proof in several steps, showing that: $i)$ $u$ is global, $ii)$ $u$ is unbounded, $iii)$ we may assume $u$ symmetric and nonincreasing for $x>0$,  $iv)$ $u(0,t)\to\infty$, and $v)$ $u(x,t)\to\infty$ for every $x\in\mathbb{R}$.

\textsc{Step $i)$} The fact that $u$ is globally defined is proved in \cite{FerreiradePabloVazquez}
for $1<p\le(m+1)/2$, while for $p\le1$ follows by comparison with the supersolution
$$
\overline u(t)=Me^t,\qquad M=1+\|u_0\|_\infty.
$$

\textsc{Step $ii)$} Assuming  $u(x,t)\le M$ for every $x$ and $t$ we have that $u$ is a supersolution to the equation
$$
v_t= (v^m)_{xx}+M^{p-q}a(x) v^q,
$$
for some $q\in(p_0,p_F)$, which is a contradiction with the fact that $v$ is unbounded.

\textsc{Step $iii)$} Approximating the reaction coefficient $a(x)$ by smooth symmetric, nonincreasing for $x>0$ functions it is easy to see that if $u_0$ is symmetric, nonincreasing for $x>0$, the same is true for $u(\cdot,t)$ for every $t>0$. Thus the next steps follow by comparison with a small initial datum with that properties.

\textsc{Step $iv)$} Assume by contradiction that there exists some sequence of times $t_j\to\infty$ such that
$\lim_{j\to\infty}u(0,t_j)=\Lambda<\infty$. Let us consider the Lyapunov energy functional defined in \eqref{eq.energia}, which is nonincreasing,
$$
E_u'(t)=-\frac{4m}{(m+1)^2} \int_{-\infty}^\infty \left((u^{\frac{m+1}2})_t\right)^2\,dx.
$$
Through our sequence $t_j$ we have that $E_u(t_j)$ is bounded. Therefore, by standard arguments $u$ should converge (up to a new subsequence of times) to an stationary solution. Since the one dimensional Green function is unbounded there is no nontrivial stationary solutions, so we get $\Lambda=0$. Now, the fact that $\limsup_{t\to\infty}u(0,t)=\infty$ implies that we can find another sequence of times $\tau_k\to\infty$ such that $\lim_{k\to\infty}u(0,\tau_k)=1$, and therefore $u$ must converge through this subsequence to a nontrivial stationary solution, which is a contradiction.

\textsc{Step $v)$} We have that given any large $A>0$ it holds $u(0,t)>A$  for every $t>t_A$. We then
use a comparison argument with a special subsolution of Polubarinova-Kochina type~\cite{PolubarinovaKochina}. These are functions  $w_A(x,t)=F_A(xt^{-1/2})$, with profile $F_A(\xi)=AF(A^{\frac{1-m}2}\xi)$, and $F$ satisfying
$$
  \begin{cases}
    (F^m)''+\dfrac12\xi F'\ge0, & \xi>0,\\
    F(0)=1,\;F(\infty)=0.
  \end{cases}
$$
The function $w_A$ is a subsolution to the pure diffusion equation, thus also a subsolution to our equation. If $m\ge1$  there is  in fact a solution, see~\cite{PolubarinovaKochina}, explicit if $m=1$. In the case $0<m<1$ we see that $F^m(\xi)=(1-B\sqrt\xi)_+$ do the job provided $B>0$ is large.

Now we observe  that  $w_A(0,t-t_A)=A<u(0,t)$ for every $t>t_A$,  $w_A(x,0)=0\le u(0,t_A)$ for every $x>0$ and $\lim\limits_{t\to\infty}w_A(x,t)=A$ for every $x\ge0$. This implies  $u(x,t)\ge w_A(|x|,t-t_A)\to A$ for every $x\in\mathbb{R}$, and $A$ is arbitrary.
\end{proof}

\section{Grow-up rate}\label{sect-guprate}\setcounter{equation}{0}

This section is devoted to  study  the speed at which the  solutions to problem~\eqref{eq.principal} tend to infinity when $0<p\le p_0$. We prove here Theorems~\ref{teo-rates-1}, \ref{teo-rates-2} and \ref{teo-rates-3}, divided in different subsections according to the different regions of the parameters.

An easy upper estimate of the grow-up rate is given by comparison with the solutions of the ODE
$$
U'(t)=U^p(t).
$$
This gives,
\begin{equation}
  \label{flat}
  u(x,t)\le\begin{cases}  (M^{1-p}+(1-p)t)^{\frac1{1-p}},& \quad \text{if } p<1, \\
  Me^t,& \quad \text{if } p=1
  \end{cases}
\end{equation}
where $M=\|u_0\|_\infty$.

In the case of global reaction it is proved in~\cite{AguirreEscobedo,dePabloVazquez} that this is indeed the grow-up rate when $0<p<1$, that is
$$
u(x,t)\sim t^{\frac1{1-p}}
$$
for $t$ large, uniformly in $\mathbb{R}$.

If $p=1$ and $m=1$  it is $u(\cdot,t)\sim t^{-1/2} e^t$. If $m\neq1$ we can perform the change of variables
$$
v(x,\tau)=e^{-t}u(x,t),\quad \tau=\frac{e^{(m-1)t}-1}{m-1},
$$
so that $v$ satisfies $v_\tau=(v^m)_{xx}$. Thus, if $m>1$ the decay $v(\cdot,\tau)\sim\tau^{-\frac{1}{m+1}}$ as $\tau\to\infty$ (see for instance \cite{V1}) implies $u(\cdot,t)\sim e^{\frac{2}{m+1}t}$ as $t\to\infty$, always uniformly in compact sets. If $0<m<1$, since $\lim_{t\to\infty}\tau=\frac1{1-m}$ we get $u(\cdot,t)\sim e^t$.
In any case
$$
\lim_{t\to\infty}\frac{\log u(x,t)}t= \min\{1,\frac2{m+1}\}
$$
uniformly in compact sets of $\mathbb{R}$.

We see next that this estimates are far from being sharp in some cases.

\subsection{Linear diffusion $m=1$, linear reaction $p=1$}

\

In this case we  have that the presence of a localized reaction provokes a growth of the solutions that is strictly slower than that of the solutions with global reaction, that is, we prove that the solutions behave for large times like an exponential, but the exponent depends on the length $L$ and is strictly less than 1.

\begin{lema}
  \label{lema-rates-2}
Let $u$ be a solution to problem \eqref{eq.principal} with  $m=p=1$. There exists an increasing function $\lambda_0:(0,\infty)\to(0,1)$ such that
$$
  \lim_{t\to\infty}\frac{\log u(x,t)}{t}=\lambda_0(L),
$$
uniformly in compact subsets of $\mathbb{R}$.
\end{lema}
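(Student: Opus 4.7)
Since \eqref{eq.principal} is linear when $m=p=1$, the natural candidate for the exponential rate is the principal eigenvalue $\lambda_0(L)$ of the Schr\"odinger-type operator $\mathcal{L}\varphi:=\varphi''+\mathds{1}_I\varphi$ on the line. My plan is to construct the corresponding ground state, identify $\lambda_0(L)$ explicitly, and then establish matching bounds $c(x)e^{\lambda_0 t}\le u(x,t)\le C(x)e^{\lambda_0 t}$ for $t$ large, uniformly on compact sets.

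\textbf{Construction of $\lambda_0(L)$.} I would look for an even positive bounded solution of $\varphi''+\mathds{1}_I\varphi=\lambda\varphi$, necessarily of the form $A\cos(\sqrt{1-\lambda}\,x)$ on $I$ and $Be^{-\sqrt{\lambda}\,(|x|-L)}$ off $I$. The $C^1$ matching at $x=L$ leads to
$$
\sqrt{1-\lambda}\,\tan\!\bigl(L\sqrt{1-\lambda}\bigr)=\sqrt{\lambda},\qquad L\sqrt{1-\lambda}\in(0,\pi/2),
$$
which admits a unique root $\lambda_0(L)\in(0,1)$; an elementary analysis of this transcendental equation yields strict monotonicity in $L$ together with $\lambda_0(L)\to 0^+$ as $L\to 0^+$ and $\lambda_0(L)\to 1^-$ as $L\to\infty$. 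Monotonicity also follows more robustly from the Rayleigh characterization $\lambda_0(L)=\sup\{\int_{-L}^L\varphi^2-\int_\mathbb{R}(\varphi')^2:\varphi\in H^1(\mathbb{R}),\ \|\varphi\|_2=1\}$.

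\textbf{Lower bound.} For each $R>L$, take the principal Dirichlet eigenpair $(\lambda_R,\varphi_R)$ of $\mathcal{L}$ on $(-R,R)$ normalised by $\max\varphi_R=1$. Domain monotonicity and density of $C_c^\infty$ in $H^1(\mathbb{R})$ give $\lambda_R\nearrow\lambda_0$ as $R\to\infty$. By Theorem~\ref{teo-GUP} one can choose $t_R$ with $u(\cdot,t_R)\ge\varphi_R$ on $[-R,R]$, and the function $w(x,t):=e^{\lambda_R(t-t_R)}\varphi_R(x)$ extended by zero outside $[-R,R]$ is a (distributional) subsolution of the linear equation on $\mathbb{R}$, because the jump of $w_x$ at $x=\pm R$ has the right sign. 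Comparison yields $u(x,t)\ge e^{\lambda_R(t-t_R)}\varphi_R(x)$ for $t\ge t_R$ and $|x|\le R$, hence $\liminf_{t\to\infty}\log u(x,t)/t\ge\lambda_R$; letting $R\to\infty$ closes the bound.

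\textbf{Upper bound and main obstacle.} Setting $v=e^{-\lambda_0 t}u$, one has $v_t=v_{xx}+(\mathds{1}_I-\lambda_0)v$; the shifted operator $\mathcal{L}-\lambda_0$ has $0$ as the top of its spectrum (attained by $\varphi_0$) with essential spectrum contained in $(-\infty,-\lambda_0]$, producing a genuine spectral gap. One must show that $v$ stays bounded on compacts. This is the delicate point, and the main obstacle of the proof: since $u_0$ is only assumed to lie in $L^1\cap L^\infty$, the naive pointwise comparison $u_0\le K\varphi_0$ fails because $\varphi_0$ decays exponentially while $u_0$ need not. I would handle this either via the spectral representation $u=\langle u_0,\varphi_0\rangle e^{\lambda_0 t}\varphi_0+e^{t\mathcal{L}}r$ with $r\perp\varphi_0$, controlling the remainder through the spectral gap, or by constructing an ad hoc supersolution combining $K\varphi_0(x)e^{\lambda_0 t}$ with a corrector of the form $K'\chi(x)e^{\nu t}$ with $\nu<\lambda_0$ and $\chi$ a bounded weight, chosen so as to absorb the non-decaying tail of $u_0$ without affecting the asymptotic rate.
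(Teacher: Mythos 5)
Your identification of $\lambda_0(L)$ via the transcendental equation $\sqrt{1-\lambda}\,\tan(L\sqrt{1-\lambda})=\sqrt{\lambda}$ coincides with the paper's (the paper writes it as the root of $h(\lambda,L)=\sqrt\lambda\cos(L\sqrt{1-\lambda})-\sqrt{1-\lambda}\sin(L\sqrt{1-\lambda})$), and your lower bound is essentially the paper's argument: the paper's profile $\varphi_{\lambda_0-\varepsilon}$, truncated at its first zero $R_\varepsilon$, is exactly your Dirichlet ground state on $(-R_\varepsilon,R_\varepsilon)$, and comparison after a time shift gives $\liminf_{t\to\infty}\log u(x,t)/t\ge\lambda_0-\varepsilon$ on $|x|<R_\varepsilon$, with $R_\varepsilon\to\infty$.

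The upper bound is where your proposal has a genuine gap, and the obstacle you flag is largely self-inflicted. The lemma asserts only the logarithmic limit, so you do not need $e^{-\lambda_0 t}u$ to stay bounded; you only need, for each $\varepsilon>0$, a supersolution growing like $e^{(\lambda_0+\varepsilon)t}$ that dominates $u_0$. The paper gets this from the same family of profiles: for $\lambda=\lambda_0+\varepsilon$ the coefficient $C_1$ of $e^{\sqrt\lambda r}$ is strictly positive, so $\varphi_{\lambda_0+\varepsilon}$ is positive and \emph{unbounded}, hence bounded below by a positive constant on all of $\mathbb{R}$; therefore $k_\varepsilon\varphi_{\lambda_0+\varepsilon}\ge u_0$ with $k_\varepsilon=\|u_0\|_\infty/\min\varphi_{\lambda_0+\varepsilon}$, and comparison yields $\limsup_{t\to\infty}\log u(x,t)/t\le\lambda_0+\varepsilon$ with no decay assumption on $u_0$ and no spectral machinery. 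Of your two proposed fixes, the spectral decomposition can be made to work for $u_0\in L^1\cap L^\infty\subset L^2$ (though you must account for possible additional bound states of $\partial_{xx}+\mathds{1}_I$ in $(0,\lambda_0)$ when $L$ is large, and upgrade $L^2$ decay of the remainder to pointwise bounds on compacts via parabolic regularity), but it is far heavier than needed. The corrector idea as stated cannot work: a positive bounded $\chi$ with $\chi''+\mathds{1}_I\chi\le\nu\chi$ on $\mathbb{R}$ exists only for $\nu\ge\lambda_0$, by the characterization of the generalized principal eigenvalue through positive supersolutions, so $K'\chi(x)e^{\nu t}$ with $\nu<\lambda_0$ is not a supersolution and, the equation being linear, adding it to $K\varphi_0(x)e^{\lambda_0 t}$ does not produce one.
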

\begin{proof}
We get the result by comparison with  explicit  growing-up subsolutions and supersolutions of the form
$$
w(x,t)=e^{\lambda t}\varphi_\lambda(|x|).
$$
The profile $\varphi=\varphi_\lambda$ satisfies
\begin{equation*}\label{two-bessel}
\left\{
\begin{array}{ll}
\varphi''+(1-\lambda)\varphi=0,&\quad\text{if } 0<r<L,\\
\varphi''-\lambda\varphi=0,&\quad\text{if } r>L,\\
\varphi(r)>0,&\quad\text{for } r\ge0,\\
\varphi'(0)=0,
\end{array}
\right.
\end{equation*}
We get
$$
\varphi(r)=\left\{
\begin{array}{ll}
\cos(\sqrt{1-\lambda}\,r) \qquad & \text{if } r<L,\\
C_1e^{\sqrt{\lambda}\,r}+C_2e^{-\sqrt{\lambda}\,r} & \text{if } r>L.
\end{array}\right.
$$
Notice that if  $L\sqrt{1-\lambda}\ge\pi/2$ the solution changes sign in $(0,L]$. So we consider $L\sqrt{1-\lambda}<\pi/2$. The compatibility condition at $r=L$ means
$$
\left\{
\begin{array}{l}
\displaystyle C_1=\frac{e^{-\sqrt\lambda L}}{2\sqrt\lambda}\left(\sqrt\lambda \cos(L\sqrt{1-\lambda})-\sqrt{1-\lambda}\sin(L\sqrt{1-\lambda})\right), \\ [4mm]
\displaystyle C_2=\frac{e^{\sqrt\lambda L}}{2\sqrt\lambda}\left(\sqrt\lambda \cos(L\sqrt{1-\lambda})+\sqrt{1-\lambda}\sin(L\sqrt{1-\lambda})\right).
\end{array}
\right.
$$
We immediately deduce $C_2>0$. On the other hand, the sign of $C_1$ coincides with the sign of $\lambda-\lambda_0$, where $\lambda_0=\lambda_0(L)$ is the unique root of the function
$$
h(\lambda,L)=\sqrt\lambda \cos(L\sqrt{1-\lambda})-\sqrt{1-\lambda}\sin(L\sqrt{1-\lambda}).
$$
Remember that $L\sqrt{1-\lambda}<\pi/2$. We  have $\lambda_0(L)$ increasing, with $\lambda_0(0)=0$ and $\lim\limits_{L\to\infty}\lambda_0(L)=1$.% , $\lambda_0\in(1-\frac{4\pi^2}{L^2},1)$ for every $L>\pi/2$, and thus $\lim\limits_{L\to\infty}\lambda_0(L)=1$.

Taking $\lambda>\lambda_0$ we get a positive unbounded profile $\varphi_\lambda$, while for $\lambda<\lambda_0$ we obtain a subsolution by the procedure of truncation by zero.

Let now $u$ be a solution to problem~\eqref{eq.principal}.
We first compare from above. For every $\varepsilon>0$ let
$$k_\varepsilon=\frac{\|u_0\|_\infty}{\min_{r>0} \varphi_{\lambda_0+\varepsilon}(r)},$$
so that
$$u_0(x)\le k_\varepsilon\varphi_{\lambda_0+\varepsilon}(x).$$
Therefore
$$
u(x,t)\le k_\varepsilon e^{(\lambda_0+\varepsilon)t}\varphi_{\lambda_0+\varepsilon}(x).
$$
Taking logarithms  we get
$$\lim_{t\to\infty}\frac{\log u(x,t)}{t}\le\lambda_0+\varepsilon$$
for  $|x|<R$, for every $R>0$ and every $\varepsilon>0$.

On the other hand, let $R_\varepsilon$ be the point where $\varphi_{\lambda_0-\varepsilon}$ vanishes, and assume without loss of generality that $\mu_\varepsilon=\min_{|x|<R_\varepsilon}u_0(x)>0$. We thus get
$$
u(x,t)\ge \mu_\varepsilon e^{(\lambda_0-\varepsilon)t}\varphi_{\lambda_0-\varepsilon}(x)
$$
for every $|x|<R_\varepsilon$. We conclude as before observing that $R_\varepsilon\to\infty$ as $\varepsilon\to0$.

\end{proof}

\subsection{The supercritical case $m\le p\le 1$, $m<1$}

\
We show here that the natural grow-up rates \eqref{flat}  are sharp in $I$ by proving the  lower bound. To do that we compare with a subsolution in separated variables with compact support,
$$
w(x,t)=\psi(t)\varphi(|x|).
$$
Notice that since $u$ has global grow-up, $u(x,t+t_0)\ge w(x,0)$ for $t_0$ large enough, then the comparison of the initial data is granted by a time shift. Thus the behaviour of $u$ is given by the behaviour of $\psi$. We consider different cases.

\begin{lema}
Let $p=m<1$ and let $u$ be a growing-up solution of \eqref{eq.principal}. Then
$$
u(x,t)\ge ct^{\frac1{1-m}}
$$
uniformly in compact sets of $\mathbb{R}$.
\end{lema}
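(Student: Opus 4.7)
The plan is to exhibit a subsolution in separated variables, $\underline w(x,t)=\psi(t)\,\varphi(|x|)$, with $\varphi$ compactly supported, as already anticipated in the opening of this subsection. Since $p=m$, the subsolution inequality factorizes: choosing $\psi'=c\,\psi^m$ (so $\psi(t)\sim t^{1/(1-m)}$, exactly the desired rate) for some small $c>0$, it suffices to produce a nonnegative compactly supported $\varphi$ such that
\[
(\varphi^m)''(x)+\mathds{1}_I(x)\,\varphi^m(x)\ge c\,\varphi(x)
\]
on $\mathrm{supp}\,\varphi$, in a weak sense that rules out negative Dirac contributions at the support boundary.

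I would build $\varphi$ by matching two explicit pieces over a support interval $[-R,R]\supset[-L,L]$. Inside $I$, take $\varphi^m(x)=\Phi_0\cos(\omega|x|)$ with frequency $\omega<\min\{1,\pi/(2L)\}$, so that $(\varphi^m)''+\varphi^m=(1-\omega^2)\varphi^m>0$. Outside $I$, take the convex decreasing power-law $\varphi^m(x)=B\,(R-|x|)^n$ with $n>1$, which makes $(\varphi^m)''>0$ and has vanishing derivative at $|x|=R$, avoiding unwanted boundary deltas. The $C^1$ matching condition at $|x|=L$ boils down to a single relation $\omega\tan(\omega L)\,(R-L)=n$, leaving enough freedom to take $R$ as large as needed. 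Because $m<1$ gives $1/m>1$, the right-hand side $c\,\varphi=c\,(\varphi^m)^{1/m}$ vanishes faster than the respective left-hand side in both pieces; a rescaling with $B$ small (and hence $\Phi_0$ small through the matching) absorbs the remaining constants and secures the spatial inequality.

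With the subsolution at hand, the initial-data comparison is granted by a time shift: Theorem~\ref{teo-GUP} gives $u(\cdot,t)\to\infty$ uniformly on $[-R,R]$, so for $t_0$ large enough one has $u(x,t_0)\ge \psi(0)\,\varphi(|x|)$ everywhere (trivially outside the support of $\varphi$), and the comparison principle yields $u(x,t+t_0)\ge \psi(t)\,\varphi(|x|)$. Since $R$ can be chosen arbitrarily large and $\varphi$ remains strictly positive on the interior of $[-R,R]$, this produces the lower bound $u(x,t)\ge c_K\,t^{1/(1-m)}$ uniformly on any fixed compact $K\subset\mathbb{R}$. The main obstacle is arranging the spatial inequality to hold across both the reaction zone and the reaction-free outer annulus without any negative-mass $\delta$ appearing in $(\varphi^m)''$ at $|x|=L$; the cosine/power-law matched profile together with the rescaling freedom coming from $1-m>0$ is what makes the construction succeed for every value of $L>0$.
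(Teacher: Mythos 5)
Your proof is correct, and it follows the same overall strategy as the paper's: a separated-variables subsolution $\psi(t)\varphi(|x|)$ with $\psi'\sim\psi^m$ (which is exactly what makes the case $p=m$ factorize), a compactly supported spatial profile whose support can be taken arbitrarily large, and a time shift granted by Theorem~\ref{teo-GUP} to order the data at the initial time. The difference lies in how the profile is produced. The paper solves the exact ODE $(\varphi^m)''+a(r)\varphi^m-\lambda\varphi=0$ with $\varphi(0)=1$, $(\varphi^m)'(0)=0$, and gets the arbitrarily large support radius $R_\lambda$ from a shooting/continuous-dependence argument in $\lambda$ (the profile crosses zero for small $\lambda$ and is unbounded for $\lambda=1$, and $R_\lambda\to\infty$ as $\lambda\to\lambda^*$). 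You instead exhibit an explicit matched sub-profile, $\varphi^m=\Phi_0\cos(\omega|x|)$ in $I$ and $B(R-|x|)^n$ outside, and verify only a differential inequality. Your computations check out: the $C^1$ matching condition $\omega\tan(\omega L)(R-L)=n$ is homogeneous of degree one in $(\Phi_0,B)$, so the smallness rescaling that exploits $1/m-1>0$ preserves it; inside $I$ the pointwise inequality reduces to $c\,\Phi_0^{1/m-1}\le 1-\omega^2$, and outside to $c\,B^{1/m-1}(R-L)^{n(1/m-1)+2}\le n(n-1)$, both achievable with $B$ (hence $\Phi_0$) small; and since $n=\omega\tan(\omega L)(R-L)$ grows with $R$, the edge behaviour of $(\varphi^m)''$ at $|x|=R$ is harmless (and in any case positive and integrable). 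What your version buys is explicitness and independence from the shooting step; what it costs is the matching bookkeeping, which the paper avoids by letting the ODE do the work.
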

\begin{proof}
In this case, $\psi=\psi_\lambda$ satisfies $\psi'=\lambda\psi^m$, and $\varphi=\varphi_\lambda$ is a solution to
$$
\left\{
\begin{array}{ll}
( \varphi^m)''+a(r)\varphi^m-\lambda \varphi=0, \quad & r>0,\\
\varphi(0)=1,\ (\varphi^m)'(0)=0.
\end{array}\right.
$$
It is easy to check that  there exists a limit value $\lambda^*>0$ such that for every $0<\lambda<\lambda^*$ the solution $\varphi$ is positive and decreasing in $[0,R_\lambda)$, with $\varphi(R_\lambda)=0$, and $\lim_{\lambda\to\lambda^*}R_\lambda=\infty$.
In fact $\varphi$ crosses the axis at some point if $\lambda=0$, so by continuous dependence with respect to $\lambda$ the same holds  when $\lambda$ is small.
On the other hand, the solution corresponding to $\lambda=1$ satisfies  $\varphi(r)=1$ in $0<r<L$ and it increases to infinity for $r>1$. The existence of $\lambda^*$ is then standard.

Let now $x_0\in\mathbb{R}$ be any point. We take $\lambda\sim\lambda^*$ so that $R_\lambda>|x_0|$. Comparison in $[0,R_\lambda]$ gives the grow-up rate in the interval $(-|x_0|,|x_0|)$.
\end{proof}

\begin{lema}\label{lem.bola}
Let $m<p\le1$ and let $u$ be a growing-up solution of \eqref{eq.principal}. Then for every $|x|<L$ it holds
$$
u(x,t)\ge c \left\{
\begin{array}{ll}
t^{\frac1{1-p}}\quad &\text{if}\quad p<1,\\
e^t & \text{if}\quad p=1.
\end{array}\right.
$$
\end{lema}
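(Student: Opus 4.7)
Following the strategy announced before the statement, the plan is to compare $u$ from below with a separated-variables subsolution $w(x,t)=\psi(t)\varphi(|x|)$ whose spatial profile $\varphi$ has compact support inside $I$. Fix $x_0\in(-L,L)$ and choose $\ell$ with $|x_0|<\ell<L$. I take $\varphi$ to be the unique positive, symmetric, strictly decreasing solution of
\begin{equation*}
(\varphi^m)''+\mu\varphi^p=0\text{ in }(-\ell,\ell),\qquad \varphi(\pm\ell)=0,
\end{equation*}
for some $\mu>0$. Via the substitution $\Phi=\varphi^m$ this reduces to the classical Lane--Emden type problem $\Phi''+\mu\Phi^{p/m}=0$, $\Phi(\pm\ell)=0$ with exponent $p/m>1$, which admits a unique positive symmetric solution by a direct phase-plane analysis. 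Extending $\varphi$ by zero outside $[-\ell,\ell]$ gives a compactly supported continuous profile; because $m<1$, the one-sided derivative $(\varphi^m)'(\ell^-)$ is finite and nonzero, so $(\varphi^m)'$ has a nonnegative jump at $|x|=\ell$, and the distributional $(\varphi^m)''$ picks up only a favourable (nonnegative) singular mass at the free boundary.

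Setting $\varphi_0:=\varphi(0)$, I then let $\psi$ solve the ODE
\begin{equation*}
\psi'=\varphi_0^{p-1}\bigl(\psi^p-\mu\psi^m\bigr),\qquad \psi(0)=\psi_0>\mu^{1/(p-m)}.
\end{equation*}
Since $p>m$, the right-hand side is positive as soon as $\psi>\mu^{1/(p-m)}$, so $\psi$ is globally defined and increasing; for $\psi$ large the equation behaves as $\psi'\sim\varphi_0^{p-1}\psi^p$, which integrates to $\psi(t)\sim c\,t^{1/(1-p)}$ when $p<1$ and to $\psi(t)\sim c\,e^t$ when $p=1$ (where $\varphi_0^{p-1}=1$).

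To check that $w=\psi\varphi$ is a subsolution, I compute on the interior of the support, using the equation for $\varphi$ and the fact that $a\equiv 1$ there,
\begin{equation*}
w_t-(w^m)_{xx}-a(x)w^p=\psi'\varphi+\mu\psi^m\varphi^p-\psi^p\varphi^p=\psi'\varphi-\varphi^p\bigl(\psi^p-\mu\psi^m\bigr).
\end{equation*}
This is nonpositive by the ODE for $\psi$ together with the pointwise inequality $\varphi^{p-1}\ge\varphi_0^{p-1}$, valid because $\varphi\le\varphi_0$ and $p-1\le 0$. Outside the support $w\equiv 0$, and the distributional singular contribution of $(w^m)_{xx}$ at $|x|=\ell$ has the right sign. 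Then, by Theorem~\ref{teo-GUP}, there exists $t_1$ such that $u(x,t_1)\ge\psi_0\varphi(|x|)$ for every $x\in\mathbb{R}$; the weak comparison principle yields $u(x,t+t_1)\ge\psi(t)\varphi(|x|)$, and evaluating at $x=x_0$ (where $\varphi(|x_0|)>0$) gives the claimed lower bound.

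The main obstacle is the justification of the comparison across the free boundary $|x|=\ell$: the profile $w$ is only $C^1$ in $x$ and $(w^m)_{xx}$ carries a positive Dirac mass at $\pm\ell$. This is the favourable sign for a subsolution, but one must invoke the appropriate weak comparison principle for porous-medium equations with reaction, or equivalently mollify $\varphi$ inside $(-\ell,\ell)$ and pass to the limit, to make the step rigorous.
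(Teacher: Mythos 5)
Your proposal is correct and follows essentially the same route as the paper: a separated-variables subsolution $w=\psi(t)\varphi(|x|)$ with a compactly supported profile solving $(\varphi^m)''+\mu\varphi^p=0$ and the time factor driven by $\psi'=\varphi^{p-1}(0)\,(\psi^p-\mu\psi^m)$, so that $\psi'\sim\psi^p$ for large times. The only differences are cosmetic (you support $\varphi$ in $(-\ell,\ell)$ with $\ell<L$ instead of scaling the support to exactly $[-L,L]$, and you spell out the verification of the subsolution inequality and the favourable Dirac mass at the free boundary, which the paper leaves implicit).
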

\begin{proof}
Let $\phi$ be a solution to the problem
$$
\left\{
\begin{array}{l}
(\phi^m)''+ \phi^p=0,\\
\phi'(0)=0,\\
\phi(0)=1.
\end{array}\right.
$$
Since $\phi^m$ is concave, $\phi$ vanish at some point $R_0<\infty$. Now, we consider
$$
\varphi(x)=A \phi(A^{(p-m)/2}x),\qquad A=\left(\frac{R_0}{L}\right)^{\frac{2}{p-m}},
$$
which satisfies the same equation and vanish at $x=L$. The function $\psi$ is defined as
$$
\psi'=\frac{\psi^p-\psi^m}{\varphi^{1-p}(0)},\qquad \psi(0)>1.
$$
Then, $\psi'\sim\psi^p$ as $t\to\infty$.
\end{proof}

In order to obtain the grow-up rate outside $I$, we consider  solutions in self-similar form to the pure diffusion equation
\begin{equation}\label{pme}
U_t=(U^m)_{xx}
\end{equation}
for $x\ne0$.
According to the behaviour of the solution to problem~\eqref{eq.principal} in $I$, the self-similar solutions used are different depending on the reaction exponent being $p<1$ or $p=1$. That is, we consider
\begin{equation}\label{SS-PME}
U(x,t)=\left\{
\begin{array}{lll}
t^{\alpha} f(\xi)\qquad &\xi=|x| t^{\beta}\qquad &\mbox{if } m<p<1\\
e^{\alpha t} f(\xi)&\xi=|x| e^{\beta t} & \mbox{if } m<p=1
\end{array}\right.
\end{equation}
Actually $\alpha=1/(1-p)$ if $p<1$ and $\alpha=1$ for $p=1$. In both cases the profile $f$ verifies the equation
\begin{equation}\label{perfil-pme}
(f^m)'' =\alpha f+\beta \xi f',\quad\xi>0,
\end{equation}
where $f'$ denotes $df/d\xi$. The similarity exponents satisfy the relations
\begin{equation}\label{exponents-SS}
\begin{array}{ll}
(1-m)\alpha=2\beta+1,&\quad\text{if } p<1, \\
(1-m)\alpha=2\beta ,&\quad\text{if } p=1,
\end{array}
\end{equation}
which in particular implies $\beta=\frac{(p-m)\alpha}{2}>0$.
Observe that if $f$ is a solution to \eqref{perfil-pme}, then $f_\lambda(\xi)=\lambda^{\frac2{1-m}}f(\lambda\xi)$ is also a solution, so we obtain a uniparametric family of solutions in terms of the value at the origin.

Equation~\eqref{perfil-pme} has been studied in detail in several papers but always with $m>1$, cf.~\cite{GildingPeletier,GildingPeletier2,Gilding}.

\begin{lema} Let $0<m<1$ and $\alpha,\,\beta>0$ be three  parameters such that $ (1-m)\alpha-2\beta\in\{0,1\}$. Then there exists a unique bounded, non-negative decreasing self-similar profile $f$ satisfying $f(0)=1$ and
$$(f^m)'' =\alpha f+\beta\xi f'\ge 0,\quad \xi>0.$$
Moreover, the behaviour of $f$ as $\xi\to\infty$ is given by
$$
f(\xi)\sim
\left\{
\begin{array}{ll}
\xi^{\frac{-2}{1-m}}\quad & \mbox{if }(1-m)\alpha-2\beta=1,\\
\xi^{\frac{-2}{1-m}} (\log\xi)^{\frac1{1-m}} & \mbox{if }(1-m)\alpha-2\beta=0.
\end{array}\right.
$$
\end{lema}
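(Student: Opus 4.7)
The plan is to treat this as a shooting problem on the parameter $s=f'(0)<0$, adapting the phase-plane machinery of Gilding--Peletier to the fast-diffusion range $m<1$. In the region $\{f>0\}$ the equation $(f^m)''=\alpha f+\beta\xi f'$ is a smooth regular second-order ODE, so for each $s<0$ there is a unique maximal solution $f_s$ with $f_s(0)=1$ and $f_s'(0)=s$, depending continuously on $s$. I would partition $(-\infty,0)$ into the set $V$ of parameters for which $f_s$ reaches zero at finite $\xi$, and the set $U$ of parameters for which $f_s$ stays positive and attains a strict local minimum. For $|s|$ large, the term $\beta\xi f_s'$ rapidly dominates and forces $(f_s^m)''$ negative, so $f_s^m$ becomes concave and crosses zero in finite $\xi$, giving $s\in V$. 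For $|s|$ small, the combination $(f_s^m)''(0)=\alpha>0$ and $(f_s^m)'(0)=ms$ only slightly negative forces $(f_s^m)'$ positive before $f_s^m$ decays appreciably, giving $s\in U$. Both sets are open and disjoint, so $s^{*}:=\sup V$ produces a separatrix $f:=f_{s^{*}}$ which is strictly decreasing, positive and bounded on $(0,\infty)$, with $\lim_{\xi\to\infty}f(\xi)=0$; otherwise the equation would force $(f^m)''\to\alpha L>0$, driving $(f^m)'\to+\infty$ and contradicting boundedness of $f^m$. The inequality $(f^m)''\geq 0$ would then follow from the monotone passage of $(f^m)'$ from $m s^{*}<0$ at the origin to $0$ at infinity. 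Uniqueness should come from the strict ordering inherent in the shooting dichotomy combined with backward uniqueness of the regular ODE.

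For the precise rate I would introduce $\tau=\log\xi$ and $F(\tau)=\xi^{a}f(\xi)$ with $a=2/(1-m)$. Using the arithmetic identity $am+2=a$, the equation becomes an autonomous second-order ODE for $F$ whose right-hand side reduces to $\beta F_\tau+\frac{\gamma}{1-m}F$, where $\gamma=(1-m)\alpha-2\beta\in\{0,1\}$. The separatrix trajectory satisfies $F\to 0$ as $\tau\to-\infty$ (since $f(0)=1$ and $a>0$) and remains bounded as $\tau\to+\infty$. In Case $\gamma=1$ the autonomous system has a non-degenerate positive equilibrium $F_{*}$ with $F_{*}^{1-m}=2m(m+1)/(1-m)$, and the bounded positive heteroclinic orbit issuing from $0$ must enter its basin, yielding $f(\xi)\sim F_{*}\,\xi^{-a}$. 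In Case $\gamma=0$ the linear term in $F$ disappears and $F=0$ is the only autonomous equilibrium, so a pure power ansatz cannot close; here I would make the further substitution $F(\tau)=\tau^{1/(1-m)}G(\tau)$, producing an asymptotically autonomous equation for $G$ with a unique positive stable equilibrium $G_{*}$, which yields the stated logarithmic correction $f(\xi)\sim G_{*}\,\xi^{-a}(\log\xi)^{1/(1-m)}$.

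The hardest step will be the Case $\gamma=0$ asymptotics: the natural power-law ansatz is degenerate, and a rigorous justification of the logarithmic correction will demand either constructing sharp sub- and super-solutions of the form $(C\pm\varepsilon)\xi^{-a}(\log\xi)^{1/(1-m)}$ on the tail and invoking a comparison principle for the ODE, or performing a careful invariant-manifold analysis of the slowly varying $1/\tau$-perturbation that the $G$-equation inherits from the change of variables.
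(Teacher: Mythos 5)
Your framework is essentially the paper's phase--plane argument in different coordinates: the paper sets $X=\xi f'/f$, $Y=\frac1m\xi^2f^{1-m}$, $\eta=\log\xi$ and obtains the profile as the separatrix joining the repeller $A=(0,0)$ to the saddle $C=(-\tfrac{2}{1-m},\tfrac{2(1+m)}{1-m})$ (resp.\ to a point at infinity when $(1-m)\alpha=2\beta$) inside an explicit backward-invariant region $\Omega$; your autonomous equation for $F(\tau)=\xi^{2/(1-m)}f$ is the same two-dimensional system, and your equilibrium $F_*^{1-m}=2m(1+m)/(1-m)$ is exactly $C$. Within that framework, however, two steps are genuinely gapped. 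The convexity $(f^m)''\ge0$ does not ``follow from the monotone passage of $(f^m)'$ from $ms^*<0$ to $0$'': that is circular, since monotonicity of $(f^m)'$ \emph{is} the statement $(f^m)''\ge0$, and the endpoint values alone give nothing. Worse, convexity is not even a local consequence of the ODE: writing $w=(f^m)''=\alpha f+\beta\xi f'$ and using $f''=\frac{w}{mf^{m-1}}+(1-m)\frac{(f')^2}{f}$, one checks that at any zero of $w$ with $f>0$,
$$
w'=\frac{\alpha f}{\beta\xi}\bigl(-m\alpha-\beta\bigr)<0,
$$
so $w$ can cross zero transversally downward. A global argument on the separatrix is required; the paper gets it from $X\ge-\tfrac{2}{1-m}$ on the connecting orbit, whence $(f^m)''=f(\alpha+\beta X)\ge f\,\tfrac{(1-m)\alpha-2\beta}{1-m}\ge0$. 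In your setting you would instead have to show that once $w<0$ it remains negative and drives $f^m$ to zero in finite $\xi$, contradicting positivity of $f_{s^*}$.

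The second gap is in the case $(1-m)\alpha-2\beta=1$: the equilibrium $F_*$ is a \emph{saddle} (it is the point $C$), so saying the bounded positive orbit ``must enter its basin'' proves nothing --- the set of points attracted to a saddle is its one-dimensional stable manifold. You need a trapping region plus a monotone quantity (the paper uses $\Omega$ together with $\dot Y>0$ there) to force the orbit onto that stable manifold and conclude $F\to F_*$; the same structural input is what underlies uniqueness of the shooting parameter, which you also leave as an assertion. On the other hand, your treatment of the existence dichotomy ($V$ nonempty for $|s|$ large, $U$ nonempty for $|s|$ small) is sound, and your plan for the logarithmic correction when $(1-m)\alpha=2\beta$ (sharp sub/supersolutions on the tail, or a slow-manifold analysis of the $1/\tau$ perturbation) is reasonable and, if executed, would be more detailed than the paper's rather terse handling of that case.
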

\begin{proof}
We introduce the following variables
$$
X=\frac{\xi f'}{f}, \qquad Y=\frac1{m} \xi^2 f^{1-m}, \qquad \eta=\log\xi.
$$
The resulting system is
$$
\left\{
\begin{array}{l}
\dot{X}=X-mX^2+Y(\alpha+\beta X)\\
\dot{Y}=(2+(1-m)X)Y
\end{array}\right.
$$
where $\dot{X}=dX/d\eta$. We look for non-negative decreasing profiles, so we consider only the second quadrant $X<0\,,\, Y>0$.

First we consider the case $(1-m)\alpha=2\beta+1$. The critical points are
$$
A=(0,0), \qquad B=\left(\frac{1}{m},0\right), \qquad C=\left(\frac{-2}{1-m},2\frac{1+m}{1-m}\right).
$$
The local analysis around those points is straightforward: $A$ is a repeller and $B$ and $C$ are saddle points. Let us define the curves in the phase space,
$$
\begin{array}{l}
\displaystyle \Gamma_1=\left\{\frac{-2}{1-m}\le X\le 0\,,\, Y=0\right\},\\[3mm]
\displaystyle \Gamma_2=\left\{X=\frac{-2}{1-m}\,,\, 0\le Y\le 2\frac{1+m}{1-m}\right\},\\[3mm]
\displaystyle \Gamma_3=\left\{\frac{-2}{1-m}\le X\le 0\,,\, Y= \frac{mX^2-X}{\alpha+\beta X}\right\}.
\end{array}
$$
Note that at $\Gamma_1\cup\Gamma_2$ we have $\dot{X}\le 0$ and $\dot{Y}=0$,
while for $\Gamma_3$  we have $\dot{X}= 0$ and $\dot{Y}>0$. Then, if we look at the orbits backward in time, the region surrounded by those curves,
$$
\Omega=\left\{\frac{-2}{1-m}\le X\le 0\,,\, 0\le Y\le \frac{mX^2-X}{\alpha+\beta X}\right\}
$$
is invariant. Even more, in this region $\dot{Y}>0$ so, the only possibility for the orbit passing through a point in either $\Gamma_2$ or $\Gamma_3$  is that it comes from  $A$. Therefore there exists a separatrix  connecting the points $A$ and $C$.

\begin{figure}[!ht]
\hspace*{-1.5cm}
\includegraphics[scale=.5]{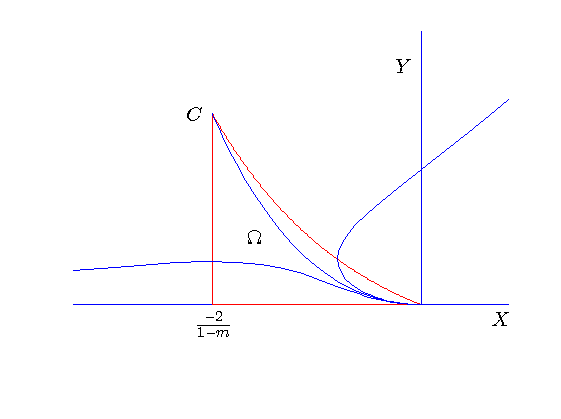}
\vspace{-1cm}
\caption{The phase plane for $(1-m)\alpha=2\beta+1$.}
\label{fig.gup}
\end{figure}

This  orbit gives us a decreasing positive self-similar profile such that
$$
f_*(\xi) \sim \xi^{-2/(1-m)} \quad \mbox{as } \xi\to\infty.
$$
Also, for $\xi$ near zero we have that $Y\sim e^{2\eta}$, and therefore $f_*(\xi)\sim 1$, so we may fix the value $f_*(0)$.

\

Now let $(1-m)\alpha =2\beta$. In this case the critical point $C$ no longer exist. However we can use the same argument as before to obtain a separatrix  which connects the point $A$ with the point at infinity with $X=-2/(1-m)$. Therefore we get a decreasing positive self-similar profile with the same behaviour as before near the origin, but satisfying for $\xi$ large
$$
f_*(\xi)\sim \left(\frac{\log\xi}{\xi^2}\right)^{\frac1{1-m}}.
$$

\begin{figure}[!ht]
\hspace*{-1.5cm}
\includegraphics[scale=.5]{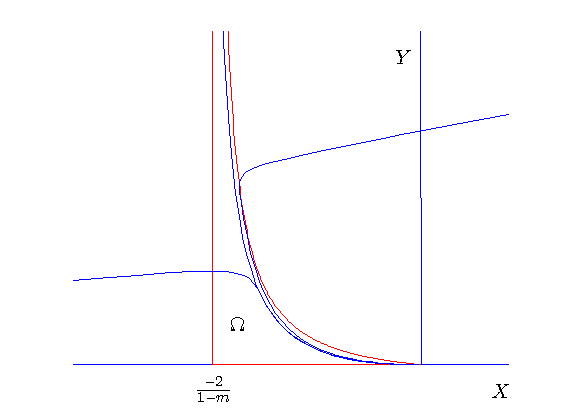}
\caption{The phase plane for $(1-m)\alpha=2\beta$.}
\label{fig.gup}
\end{figure}

Finally, we observe that in both cases the separatrix  lives in the set $\Omega$, which implies  $X\ge -2/(1-m)$. This implies
$$
\alpha f_*+\beta \xi f_*'=f_* (\alpha+\beta X)\ge 0.
$$
\end{proof}

\begin{cor}
There exists a uniparametric family of symmetric self-similar solutions $U$ to equation~\eqref{pme} for $x\ne0$ in the form \eqref{SS-PME}, \eqref{exponents-SS}, which is decreasing for $x>0$ and increasing for $t>0$. Moreover, for $|x|$ large
$$
|x|^{2}U^{1-m}(x,t)\sim
\left\{
\begin{array}{ll}
t, & p<1,\\
\log |x|+\frac{1-m}2 t,\quad &p=1.
\end{array}\right.
$$
Each solution is characterized by the value of the profile at the origin.
\end{cor}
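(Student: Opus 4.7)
The plan is to take the separatrix profile $f_*$ produced by the previous lemma, with the specific choice of $(\alpha,\beta)$ dictated by \eqref{exponents-SS}: namely $\alpha=1/(1-p)$ in the case $m<p<1$, and $\alpha=1$, $\beta=(1-m)/2$ in the case $m<p=1$. The corresponding $U$ is then defined by the ansatz \eqref{SS-PME}, which is symmetric in $x$ by construction since it only involves $|x|$. The scaling invariance noted just after \eqref{perfil-pme}, i.e. that $\xi\mapsto\lambda^{2/(1-m)}f_*(\lambda\xi)$ is again a profile for any $\lambda>0$, yields the uniparametric family and, by adjusting $\lambda$, realizes any prescribed positive value at $\xi=0$.

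Next I would verify by a direct chain-rule calculation that this $U$ solves \eqref{pme} for $x\ne 0$: substituting the ansatz reduces $U_t=(U^m)_{xx}$ to the profile equation \eqref{perfil-pme}, and the similarity relations \eqref{exponents-SS} are exactly what forces the $t$-exponents to match on both sides. Monotonicity in $|x|$ is immediate from $f_*'<0$. For time monotonicity, differentiating produces
$$U_t = t^{\alpha-1}\bigl(\alpha f_* + \beta\xi f_*'\bigr)\ \ (p<1),\qquad U_t = e^{\alpha t}\bigl(\alpha f_* + \beta\xi f_*'\bigr)\ \ (p=1),$$
which is non-negative because the lemma supplies $\alpha f_* + \beta\xi f_*' = (f_*^m)''\ge 0$, and in fact strictly positive on the separatrix (since $(\alpha+\beta X)>0$ inside the invariant region $\Omega$).

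The asymptotic claim comes from clearing the self-similar variables in $|x|^2 U^{1-m}$. In the case $p<1$,
$$|x|^2 U^{1-m}(x,t) = \xi^2 \, t^{\alpha(1-m)-2\beta}\, f_*^{1-m}(\xi) = t \cdot \xi^2 f_*^{1-m}(\xi),$$
using the first of \eqref{exponents-SS}; by the lemma $\xi^2 f_*^{1-m}(\xi)$ stays bounded between two positive constants as $\xi\to\infty$, so $|x|^2U^{1-m}\sim t$ for $|x|$ large. In the case $p=1$ the analogous calculation yields $|x|^2 U^{1-m} = \xi^2 f_*^{1-m}(\xi)$, and the lemma gives $\xi^2 f_*^{1-m}(\xi)\sim\log\xi=\log|x|+\beta t = \log|x|+\tfrac{1-m}{2}t$, matching the statement.

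The whole argument is essentially plug-and-chug once the separatrix of the lemma is in hand; the only delicate point is keeping track of the two similarity relations \eqref{exponents-SS} when reading off the asymptotic orders, and no additional obstacle is expected.
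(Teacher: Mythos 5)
Your proposal is correct and is essentially the argument the paper intends: the corollary is stated without proof precisely because it is the direct translation of the preceding lemma (separatrix profile, scaling family $f_\lambda(\xi)=\lambda^{2/(1-m)}f(\lambda\xi)$, sign of $\alpha f+\beta\xi f'$, and the two asymptotic regimes read off through \eqref{exponents-SS}) into the original variables. Your exponent bookkeeping in both cases $p<1$ and $p=1$ checks out.
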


\begin{lema}
Let $m<p\le 1$ and $u_0(x)$ satisfying \eqref{condicion_dato}. Then, for every $|x|>L$ and $t$ large it holds
$$
u(x,t)\sim
t^{\frac1{1-m}}.
$$
\end{lema}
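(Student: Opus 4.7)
The plan is to establish matching bounds on $u(x,t)$ for $|x|>L$ by comparison with the self-similar solutions $U$ of the pure diffusion equation $U_t=(U^m)_{xx}$ constructed in the previous Corollary, whose far-field asymptotic $|x|^{2}U^{1-m}(x,t)\sim t$ (with the logarithmic correction when $p=1$) matches exactly the profile prescribed on $u_0$ by \eqref{condicion_dato}, and which satisfy $U(x,t)\sim |x|^{-2/(1-m)}\,t^{1/(1-m)}$ at each fixed $|x|>L$ as $t\to\infty$. The heuristic behind the statement is that the fast interior rate $t^{1/(1-p)}$ only fuels a boundary layer of shrinking thickness near $|x|=L$, so at any fixed $|x|>L$ the solution ends up feeling only the slower diffusive rate $t^{1/(1-m)}$.

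\textbf{Lower bound.} Since $U$ solves the pure diffusion equation and $a(x)u^p\ge 0$, $U$ is a subsolution of \eqref{eq.principal} on $\mathbb{R}$. Using the one-parameter family of self-similar solutions (parametrized by the value at the origin) together with the matching asymptotic granted by \eqref{condicion_dato}, we select an amplitude and a shift $t_0>0$ such that $U(\cdot,t_0)\le u_0$ on all of $\mathbb{R}$: at infinity this follows from the common far-field, while on compact sets we use $u_0>0$ and pick a small amplitude. The comparison principle applied to $\widetilde U(x,t):=U(x,t+t_0)$ and $u$ yields $u(x,t)\ge U(x,t+t_0)\ge c\,t^{1/(1-m)}$ at each fixed $|x|>L$ and $t$ large.

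\textbf{Upper bound.} This is the main obstacle. A direct pure-diffusion comparison confined to the exterior $\{|x|>L\}$ fails on the lateral boundary $\{|x|=L\}$, since Lemma~\ref{lem.bola} only gives $u(\pm L,t)\le Ct^{1/(1-p)}$, which (as $p>m$) grows strictly faster than $\overline U(\pm L,t)\sim t^{1/(1-m)}$. We therefore construct a supersolution of the full equation on all of $\mathbb{R}$ of the form
$$
\overline u(x,t)=\overline U(x,t+t_0)+\rho(t)\chi(x),
$$
where $\overline U$ is a self-similar solution of large enough amplitude so that $\overline U(\cdot,t_0)\ge u_0$ (arranged through \eqref{condicion_dato}), $\rho(t)=At^{1/(1-p)}$ with $A$ large enough so that $\rho'\ge\rho^p$, and $\chi$ is a smooth nonnegative cutoff, equal to $1$ on $\overline I$ and supported in $(-L-\delta,L+\delta)$. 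The supersolution inequality is immediate on $I$ (by the ODE bound on $\rho$ combined with the monotonicity of $s\mapsto s^m$) and trivial outside the support of $\chi$ (where $\overline u=\overline U$ solves pure diffusion). The technical heart of the argument, and the main obstacle, is the verification of the inequality in the transition layer $L<|x|<L+\delta$: the contributions to $((\overline U+\rho\chi)^m)_{xx}$ coming from $\chi''$ and $\chi'$ must be absorbed by the favorable term $\rho'\chi$, which is feasible by a careful choice of $\chi$ (keeping $\delta$ not too small compared to a suitable power of $\rho^{-1}$). Once $\overline u$ has been validated as a supersolution, comparison yields $u\le\overline u$, and since $\chi(x)=0$ for $|x|\ge L+\delta$, we conclude $u(x,t)\le \overline U(x,t+t_0)\sim t^{1/(1-m)}$ at each fixed $|x|>L+\delta$; letting $\delta\to 0^+$ extends the bound to every $|x|>L$, matching the lower bound and completing the proof.
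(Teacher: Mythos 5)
There are two genuine gaps, both stemming from a misreading of how the self-similar solutions $U(x,t)=t^{\alpha}f(|x|t^{\beta})$, $\alpha=\frac1{1-p}$, $\beta=\frac{(p-m)\alpha}{2}>0$, behave. Note first that $U$ grows at the \emph{fast} rate $t^{\alpha}=t^{1/(1-p)}$ at its center $\xi=0$, and only at a \emph{fixed} $x\ne 0$ (where $\xi=|x|t^{\beta}\to\infty$ and $f(\xi)\sim\xi^{-2/(1-m)}$) does it slow down to $t^{1/(1-m)}$. Your upper bound is built on the opposite belief: you assert that an exterior supersolution must satisfy $\overline U(\pm L,t)\sim t^{1/(1-m)}$ and hence cannot dominate $u(\pm L,t)\le Ct^{1/(1-p)}$, and you conclude that the direct exterior comparison fails. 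It does not: taking $\overline w(x,t)=AU(x-L,t+t_0)$ on $\{x>L\}$ one has $\overline w(L,t)=A(t+t_0)^{1/(1-p)}f(0)$, which dominates the lateral data furnished by \eqref{flat} for $A$ large, while \eqref{condicion_dato} handles the initial comparison; this is precisely the paper's (short) argument. Your replacement construction $\overline U+\rho\chi$ is therefore unnecessary, and moreover it is not actually verified: on $I$ the claimed ``immediate'' inequality does not follow from monotonicity of $s\mapsto s^m$, since $((\overline U+\rho)^m)_{xx}$ is not $(\overline U^m)_{xx}+(\rho^m)_{xx}$ and the cross terms $m(m-1)(\overline U+\rho)^{m-2}\overline U_x^2$ versus $m(m-1)\overline U^{m-2}\overline U_x^2$ have no favorable ordering in general; and the transition-layer estimate, which you yourself identify as the heart of the matter, is only asserted to be ``feasible''.

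The lower bound also has a flaw. The even reflection of the profile is \emph{not} a subsolution of \eqref{eq.principal} on all of $\mathbb{R}$: since $(f^m)''=\alpha f+\beta\xi f'>0$ near $\xi=0$ and $f$ is decreasing, necessarily $(f^m)'(0^+)<0$, so the reflected $U^m$ has a concave corner at $x=0$ and $(U^m)_{xx}$ carries a negative Dirac mass there; distributionally $U_t-(U^m)_{xx}=c(t)\delta_0\ge 0$, i.e.\ $U$ is a supersolution of pure diffusion with a point source, which cannot be absorbed by the bounded term $a(x)U^p$. The comparison must instead be carried out on $\{|x|>0\}$, where the corner sits on the parabolic boundary, and the ordering $AU(0,t+t_0)=A(t+t_0)^{1/(1-p)}f(0)\le u(0,t)$ on that boundary has to be supplied by the interior grow-up rate $u(0,t)\ge \overline{c}\,t^{1/(1-p)}$ of Lemma~\ref{lem.bola} --- an ingredient your lower bound never invokes. (A further minor point: arranging $U(\cdot,t_0)\le u_0$ on compact sets requires $u_0>0$ outside $I$, which is not assumed; the paper sidesteps this by using a subsolution vanishing at $t=0$.) In short, the correct mechanism in both directions is an exterior comparison in which the self-similar function carries the fast rate on the lateral boundary and the slow rate at fixed exterior points; your proposal misses this and neither half, as written, closes.
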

\begin{proof}
We first consider  the case $p<1$. By Lemma~\ref{lem.bola} we know that $u(0,t)\ge \overline c t^{1/(1-p)}$ for $t\ge t_0$. On the other hand, for $t\in[0,t_0]$ we note that $u$ is a supersolution of the fast diffusion equation, then $u(0,t)\ge c$. Thus, there exists a constant $\overline c$ such that $u(0,t)\ge \overline c t^{1/(1-p)}$ for $t\ge 0$. Then,  $u$ is a supersolution to
$$
\left\{
\begin{array}{ll}
w_t=(w^m)_{xx}, \qquad &|x|>0,\, t>0,\\
w(0,t)=\overline c t^{1/(1-p)},\\
w(x,0)=u_0(x).
\end{array}\right.
$$
Let us consider $\underline w(x,t)=AU(x,t)$, where $U$ is the selfsimlar solution constructed before with $\alpha={1/(1-p)}$, $\beta=\alpha(p-m)/2$ and $f(0)=1$.

Since $U_t\ge 0$ we get
$$
\underline w_t-(\underline w^m)_{xx}=(A-A^m)U_t\le 0
$$
provided $A<1$. At time $t=0$ we have that $\underline w(x,0)=0$, because the profile $f$ is bounded. Finally, $\underline w(0,t)=A t^\alpha f(0)\le \overline c t^\alpha$ for $A$ small enough. Then by comparison $u\ge \underline w$, and thus, for $t$ large,
$$
u(x,t)\ge \underline w(x,t)\sim |x|^{\frac{-2}{1-m}} t^{\frac{1}{1-m}}.
$$

In order to obtain the upper estimate, we observe that from \eqref{flat},  $u(L,t)\le (M^{1-p}+(1-p)t)^{\frac1{1-p}}$ for $t\ge0$. Then, for $t_0$ large enough, $u$ is a subsolution to
$$
\left\{
\begin{array}{ll}
w_t=(w^m)_{xx}, \qquad &|x|>L,\, t>0,\\
w(L,t)=\underline c (t+t_0)^{1/(1-p)},\\
w(x,0)= u(x,0).
\end{array}\right.
$$
Consider now the function $\overline w(x,t)=AU(x-L,t+t_0)$. Thanks to the hypothesis on the decay of the initial data, cf. \eqref{condicion_dato},  we get that $\overline w(x,t)$ is a supersolution of the above problem. Then, by comparison
$$
u(x,t)\le \overline w(x,t)\sim |x|^{\frac{-2}{1-m}} t^{\frac{1}{1-m}},
$$
for $t$ large.

Finally for $p=1$, we argue in the same way, taking $\underline w=AU(x,t+1)$ and $\overline w=AU(x,t)$, where  $U(x,t)$ be the selfsimlar solution constructed before with $\alpha=1$,  $\beta=(1-m)/2$ and $f(0)=1$.
\end{proof}

\subsection{The subcritical case $p<m$}

\

We construct a grow-up sub and super-solution. The function obtained in this case is not of self-similar form but the matching of a self-similar function with a grow-up parabola. Consider the even $C^1$ function obtained by reflection from
$$
w(x,t)= \left\{
\begin{array}{ll}
\Big(V^m(0,t)+\frac{K}{2L}V^p(0,t)(L^2-x^2)\Big)^{1/m},\qquad &0\le x\le L,\\
V(x-L,t),&  x\ge L,
\end{array}\right.
$$
where $V$ is a self-similar sub or super-solution to the problem
$$
\left\{
\begin{array}{ll}
V_t=(V^m)_{xx}, & x>0,\, t>0,\\
-(V^m)_x(0)= K V^p(0), \qquad & t>0.
\end{array}\right.
$$
We look, analogously as before, for potential self-similarity if $m<p<p_0$, while for $p=p_0>1$ we consider exponential self-similarity, that is, we put
$$
V(x,t)=\left\{
\begin{array}{ll}
t^\alpha F(x t^{\beta}), \quad & \text{ if } m<p<p_0,\\
e^{\lambda t} F(x e^{\mu t}), & \text{ if } p=p_0>1.
\end{array}\right.
$$

\begin{lema}
Let $p<\min\{m,p_0\}$. Then,
$$
u(x,t)\sim t^{\frac{1}{m+1-2p}}
$$
uniformly in compacts of $\mathbb R$.
\end{lema}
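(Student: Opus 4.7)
The plan is to realize the construction outlined just above the statement: match a sub- and a super-solution of the quoted form, with $V$ self-similar on $\{x\ge 0\}$ and an interior parabola in $x^2$ on $[-L,L]$, and then invoke comparison after a time shift made available by Theorem~\ref{teo-GUP}.

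First I would look for $V(x,t)=t^\alpha F(xt^\beta)$ solving the pure diffusion equation $V_t=(V^m)_{xx}$ on $\{x>0\}$ with the nonlinear flux $-(V^m)_x(0,t)=K V^p(0,t)$. Matching powers of $t$ forces $\alpha=\frac{1}{m+1-2p}$ and $\beta=(p-m)\alpha$. In the present regime $\alpha>0$ while $\beta<0$ strictly, and this last sign is the crucial point: for any fixed compact set in $x$ the similarity variable $\xi=xt^\beta$ tends to $0$ as $t\to\infty$, so $V(x,t)\sim t^\alpha F(0)$ uniformly, which is precisely the claimed rate, and uniform on compacts of the whole $\mathbb R$ rather than only of $I$. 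The profile must satisfy $(F^m)''=\alpha F+\beta\xi F'$ with $F(0)=a$ and $(F^m)'(0)=-K a^p$.

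Next I would establish existence of a positive, bounded, decreasing profile $F$ by a phase-plane analysis analogous to the one used in the previous lemma, with the variables $X=\xi F'/F$ and $Y=m^{-1}\xi^2 F^{1-m}$. The scaling invariance $F_\lambda(\xi)=\lambda^{2/(1-m)}F(\lambda\xi)$ yields a two-parameter family in $(K,a)$, which later provides enough freedom to tune the sub/super-solution inequality. This is the step I expect to be the main obstacle: the reversed sign of $\beta$ relocates several critical points and flips the orientation of vector-field arrows with respect to the previous lemma, so the invariant region trapping the relevant separatrix, and the verification that $F$ decays (rather than explodes) as $\xi\to\infty$, have to be rebuilt.

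With $F$ in hand I would define $w$ as in the displayed formula and extend it symmetrically to $x<0$. By construction $w$ and $(w^m)_x$ are continuous across $x=\pm L$, and outside $I$ the pure diffusion equation holds exactly. Inside $I$, a direct computation gives $(w^m)_{xx}=-(K/L)V^p(0,t)$, and the quantity $w_t-(w^m)_{xx}-w^p$ reduces to an algebraic expression in $V(0,t)$, $K$ and $L^2-x^2$ whose sign can be made $\le 0$ by choosing $K$ and $a$ small (yielding a subsolution $\underline w$) and $\ge 0$ by choosing them large (yielding a supersolution $\overline w$). Finally, since by Theorem~\ref{teo-GUP} we have $u\to\infty$ uniformly on compacts while $u_0>0$ on $\overline I$, there exist time shifts $t_0,t_0'$ with $u(\cdot,t_0)\ge\underline w(\cdot,0)$ and $\overline w(\cdot,t_0')\ge u_0$; the comparison principle then sandwiches $u$ between the two, both of which behave like $t^{1/(m+1-2p)}F(0)$ uniformly on compacts, and the lemma follows.
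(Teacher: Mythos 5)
Your skeleton (parabola in $x^2$ inside $I$ matched $C^1$ in $w^m$ to a self-similar $V$ outside, the sign of $w_t-(w^m)_{xx}-w^p$ on $I$ controlled by $K$ versus $L$, and the time shift supplied by Theorem~\ref{teo-GUP}) is exactly the paper's, and your exponent bookkeeping $\alpha=\frac1{m+1-2p}$, $\beta=(p-m)\alpha<0$, $\xi\to0$ on compacts, is correct. The genuine gap is the object you defer to a phase-plane analysis: a single \emph{bounded, positive, decreasing} profile $F$ with $-(F^m)'(0)=KF^p(0)$ that serves for both halves of the sandwich. First, for $m>1$ no such profile exists: along any decreasing positive arc one has $(F^m)''=\alpha F+\beta\xi F'>0$, and the monotone energy $E(\xi)=\frac12|(F^m)'|^2-\frac{\alpha m}{m+1}F^{m+1}$ (here $E'=\beta\xi\,mF^{m-1}(F')^2\le0$) forces either extinction of $F$ at a finite $\xi$ or a positive minimum followed by unbounded growth; there is no bounded decreasing branch. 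Second, and more decisively, even in the range $m\le 1$ where a decaying profile exists, it tends to $0$ as $\xi\to\infty$, so at the initial comparison time $\overline w(\cdot,t_1)$ decays in $x$ and cannot dominate an arbitrary $u_0\in L^1\cap L^\infty$ at spatial infinity; you would need a decay hypothesis on $u_0$ of the type \eqref{condicion_dato}, which the lemma (and Theorem~\ref{teo-rates-3}(a)) deliberately does not assume. So the supersolution half of your argument fails as stated.

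The paper avoids both problems by using \emph{two different} profiles rather than one. For the lower bound it takes the explicit compactly supported $F=(A-B\xi)_+^\gamma$ (with $\gamma=2/m$ or $1/(m-1)$), which is only required to satisfy the differential \emph{inequality} $\alpha(F-(m-p)\xi F')\le(F^m)''$ together with the flux condition; no shooting or phase plane is needed, and the support of the resulting $w$ expands to cover every compact set. For the upper bound it takes $F(0)$ large so that $E(0)<0$; the decreasing energy then forces $F$ to have a strictly positive minimum and become \emph{unbounded} as $\xi\to\infty$, and it is precisely this growth at spatial infinity that makes $\overline w(\cdot,t_1)\ge u_0$ achievable for every bounded datum while still giving $\overline w\sim t^\alpha F(0)$ on compacts (where $\xi\to0$). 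If you replace your single bounded profile by this pair, the rest of your argument goes through.
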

\begin{proof}
To obtain the lower estimate, we consider
$$
V=t^\alpha (A-B x t^{(m-p)\alpha})_+^\gamma
$$
where $\alpha=1/(m+1-2p)$, and $\gamma=2/m$ if $m\le1$, $\gamma=1/(m-1)$ when $m>1$. The positive constants $A$ and $B$ have to be suitably chosen. In fact, the profile $F(\xi)$, $\xi=x t^{(m-p)\alpha}$ satisfies
$$
\begin{cases}
    \alpha(F-(m-p)\xi F')\le(F^m)'',&\quad \xi>0, \\ -(F^m)'(0)= K F^p(0).
  \end{cases}
$$
provided $A$ is small and $B=\frac{K}{\gamma} A^{\gamma(p-1)-1}$. Therefore $w$ is a subsolution to our equation for $|x|\ge L$.
For $0\le x\le L$ and $t$ large enough it is easy to see that
\begin{equation}
  \label{eq.compor.w}
w_t-(w^m)_{xx}-w^p\sim Ct^{\alpha-1}-(1-\frac{K}{L}) t^{\alpha p}.
\end{equation}
Thus, taking $K<L$ and $t>t_0$, we obtain that $w$ is also a  subsolution in $|x|\le L$. On the other hand, since our solution $u$ has global grow-up, there exists a time $t_1\ge t_0$ such that $u(x,t_1)\ge w(x,t_0)$. Then, by comparison we obtain the lower grow-up estimate.

For the upper grow-up estimate, we consider a self-similar function
$$
V(x,t)=t^\alpha F(\xi), \qquad \xi=x t^{-(m-p)\alpha},
$$
with $F$  positive and unbounded. The existence of  profiles satisfying
$$
\left\{
\begin{array}{l}
(F^m)_{xx}-\alpha F+(m-p)\alpha \xi F'=0,\quad\xi>0, \\
-(F^m)_x(0)=KF^p(0),
\end{array}\right.
$$
follows by considering the energy associated, which is given by
$$
E(\xi)=\frac12\left|(F^m)'\right|^2-\frac{\alpha m}{m+1}F^{m+1}.
$$
Notice that it is non-increasing,
$$
E'(\xi)=-\frac{4m\beta}{(m+1)^2}\left|(F^{\frac{m+1}2})'\right|^2\le0.
$$
Thus, if $F(0)>\left(\frac{K^2(m+1)}{2\alpha m}\right)^\alpha$ we have $E(0)<0$, which implies that $w$ must have a minimum at some finite point $\xi_0$, with
$$
F(\xi_0)\ge F(0)\left(1-\frac{2K^2(m+1)}{\alpha m}(F(0))^{2p-m-1}\right)^{\frac1{m+1}}.
$$
Then, $w$ is a solution of our equation in $|x|\ge L$. For $|x|\le L$ the behaviour \eqref{eq.compor.w} gives us that if $t>t_0$ and $K>L$ then $w$ is supersolution.
Moreover, since $w$ has global grow-up, then for $t_1>t_0$ large $u_0(x)\le w(x,t_1)$ and by comparison we obtain the upper grow-up estimate.
\end{proof}

\begin{lema}
Let $p=p_0>1$. Then there exists a positive constant $\beta_*$ such that for all $\varepsilon>0$
$$
\lim_{t\to\infty}\frac{\log u(x,t)}{t}=\beta_* L^2,
$$
uniformly in compact sets.
\end{lema}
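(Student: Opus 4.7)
The plan is to extend to the critical exponential regime the piecewise comparison scheme of the previous lemma, replacing power self-similarity with exponential self-similarity. I take $w$ of the same form,
\begin{equation*}
w(x,t)=\bigl(V^m(0,t)+\tfrac{K}{2L}V^p(0,t)(L^2-x^2)\bigr)^{1/m}\ \text{for}\ |x|\le L,\quad w(x,t)=V(|x|-L,t)\ \text{for}\ |x|\ge L,
\end{equation*}
with exterior profile
\begin{equation*}
V(x,t)=e^{\lambda t}F(|x|\,e^{-\mu t}),\qquad \mu=\tfrac{m-1}{2}\lambda>0.
\end{equation*}
Matching both $V_t=(V^m)_{xx}$ and the flux $-(V^m)_x(0,t)=KV^p(0,t)$ forces this value of $\mu$ (using $2p=m+1$), and produces the profile equation $(F^m)''=\lambda F-\mu\xi F'$ with $-(F^m)'(0)=KF^p(0)$.

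The critical identity $2p=m+1$ makes the energy analysis clean. The monotone functional
\begin{equation*}
E(\xi)=\tfrac12|(F^m)'|^2-\tfrac{m\lambda}{m+1}F^{m+1},\qquad E'(\xi)=-m\mu\xi F^{m-1}(F')^2\le 0,
\end{equation*}
satisfies $E(0)=F(0)^{m+1}\bigl(\tfrac{K^2}{2}-\tfrac{m\lambda}{m+1}\bigr)$, so whenever $\lambda>\tfrac{(m+1)K^2}{2m}$ we have $E<0$ throughout, which prevents $F$ from vanishing and gives a positive, unbounded super-profile. For a sub-profile I use the truncated explicit ansatz $F(\xi)=(A-B\xi)_+^{1/(m-1)}$, with $A$, $B$ fixed by the boundary flux, exactly as in the previous lemma. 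The interior computation of $w_t-(w^m)_{xx}-w^p$ reduces to the one there and yields the dominant balance $Ce^{\lambda t}-(1-K/L)e^{p\lambda t}$; hence $w$ is a subsolution when $K<L$ and a supersolution when $K>L$, for $t$ past a threshold $t_0$. Initial data comparison is achieved by a time shift, legitimate by Theorem~\ref{teo-GUP}, producing bounds
\begin{equation*}
\lambda_{\mathrm{sub}}(K)\le\liminf_{t\to\infty}\frac{\log u(x,t)}{t}\le\limsup_{t\to\infty}\frac{\log u(x,t)}{t}\le\lambda_{\mathrm{sup}}(K')
\end{equation*}
uniformly on compact sets, for every admissible $K<L<K'$.

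The $L$-dependence in the statement is then pinned down by the scaling
\begin{equation*}
u(x,t)=L^{4/(m-1)}\,\widetilde u(x/L,\,L^2t),
\end{equation*}
which transforms \eqref{eq.principal} into the same equation with $L$ replaced by $1$. Hence $\log u(x,t)/t=L^2\log\widetilde u/(L^2t)+o(1)$ and it suffices to establish the existence of the limit for $L=1$; that common value is then by definition $\beta_*$, and $\beta_*>0$ follows from any subsolution produced above with $K<1$.

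I expect the main difficulty to be the reconciliation of the upper and lower exponents: showing that $\lambda_{\mathrm{sub}}(K)$ and $\lambda_{\mathrm{sup}}(K')$ share a common limit as $K,K'\to L$. This should follow from continuous dependence of the profile families on $K$ together with the observation that, at the critical relation $2p=m+1$, the threshold $\lambda=\tfrac{(m+1)K^2}{2m}$ of the super-profile coincides with the natural matching exponent of the truncated sub-profile. Once this continuity is secured, the scaling of the previous paragraph delivers the claimed rate $\beta_*L^2$.
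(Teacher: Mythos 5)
Your construction of the two barrier families is sound as far as it goes, but the proposal has a genuine gap at exactly the point you flag, and the resolution you sketch does not work. After the normalization $F(\xi)=AG(KA^{p-m}\xi)$ the only surviving parameter is $\beta=\lambda/K^2$, so ``continuous dependence on $K$'' cannot close anything: the $K$-dependence scales out entirely. Your supersolution criterion $E(0)<0$ requires $\beta>\frac{m+1}{2m}$, while your explicit truncated sub-profile $F=(A-B\xi)_+^{1/(m-1)}$, with $B$ fixed by the flux condition and $2p=m+1$, works only for $\beta\le \frac1m$ (the interior inequality reduces to $\lambda A\le \frac{mB^2}{(m-1)^2}=\frac{K^2}{m}A$). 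Since $\frac1m<\frac{m+1}{2m}$ for $m>1$, the two thresholds do \emph{not} coincide, contrary to your final claim, and your squeeze only yields $\frac{L^2}{m}\le\liminf\frac{\log u}{t}\le\limsup\frac{\log u}{t}\le\frac{(m+1)L^2}{2m}$. The limit is not pinned down.

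The missing idea, which is the heart of the paper's proof, is a shooting argument on the normalized profile problem
\begin{equation*}
(G^m)''-\beta G+(m-p)\beta\eta G'=0,\qquad -(G^m)'(0)=G^p(0)=1 .
\end{equation*}
For $\beta$ large the energy argument gives a positive unbounded profile; for $\beta$ small the profile changes sign; by continuity there is a sharp threshold $\beta_*$ such that for every $\beta>\beta_*$ the profile is positive and unbounded (hence a supersolution ingredient for every $\lambda>\beta_*K^2$), while at $\beta=\beta_*$ the profile is nonnegative, decreasing and \emph{compactly supported} (this follows from $E_{G_*}\ge0$), and this borderline profile is the subsolution ingredient for $\lambda=\beta_*K^2$ with any $K<L$. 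It is the single sharp constant $\beta_*$, used on both sides, that makes the upper and lower exponents meet at $\beta_*L^2$ as $K\to L^{\pm}$. Your rescaling $u(x,t)=L^{4/(m-1)}\widetilde u(x/L,L^2t)$ is correct and is a nice alternative explanation of the factor $L^2$, but it only reduces the problem to $L=1$; it does not produce the existence of the limit, which still requires the matching sharp threshold described above.
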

\begin{proof}
Let $V$ be a exponential self-similarity function,
$$
V(x,t)=e^{\lambda t} F(\xi),\qquad \xi=|x| e^{-(m-p)\lambda t},
$$
where the profile $F$ satisfies
$$
\left\{
\begin{array}{l}
(F^m)''-\lambda F+(m-p)\lambda \xi F'=0,\quad\xi>0,\\
-(F^m)'(0)=KF^p(0).
\end{array}\right.
$$
Let $A=F(0)$. Performing the rescaling $F(\xi)=AG(KA^{p-m}\xi)$, we get
$$
\left\{
\begin{array}{l}
(G^m)''-\beta G+(m-p)\beta \eta G'=0,\quad\xi>0,\\
-(G^m)'(0)=G^p(0)=1,
\end{array}\right.
$$
where $\beta=\lambda/K^2$ and $\eta=KA^{p-m}\xi$. In this case the non-increasing energy is given by
$$
E_G(\xi)=\frac12\left|(G^m)'\right|^2-\frac{\beta m}{m+1}F^{m+1}.
$$
Thus, for $\beta$ large the profile has a positive minimum point and it becomes unbounded. On the other hand, for $\beta=0$ the solution changes sing, then by continuous dependence on the parameter for  $\beta$ small enough the profile changes sing.

Therefore, by a soothing argument there exists $\beta_*>0$ such that for $\beta>\beta_*$ the profile is positive and unbounded and for $\beta=\beta_*$ the profile is nonnegative and decreasing. Even more, since $E_{G_*}(\eta)\ge0$ we have
that
$$
|(G_*^{\frac{m-1}{2}})'|^2(\eta)\ge C_m \beta_*.
$$
Thus the profile $G_*$ has compact support.

\begin{figure}[!ht]
\includegraphics[scale=.5]{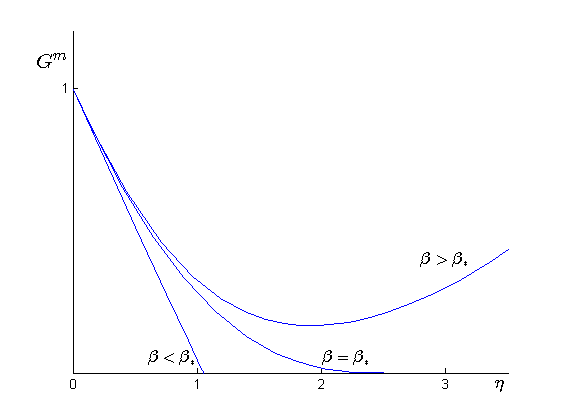}
\caption{The different profiles in terms of $\beta$.}
\label{fig.gup}
\end{figure}

In terms of the function $w$ we have that for $|x|>L$ it is a solution of our problem and it is positive and unbounded for $\lambda>\beta_* k^2$, while for $\lambda=\beta_* k^2$ it is non-negative and   compactly supported. On the other hand, for $0\le x\le L$ and $t$ large enough we have
$$
w_t-(w^m)_{xx}-w^p\sim  e^{\lambda t}+(\frac{K}{L}-1)e^{\lambda p t}.
$$
Then $w$ is a subsolution if $K<L$ and a supersolution if $K>L$.

Summing-up,  we have obtained that for $\alpha>\beta_* L^2$ there exists $K>L$ and $t_0>0$ such that $w$ is a supersolution of our equation for $t\ge t_0$ and $w(x,t_0)\ge \|u_0\|_\infty$. Then by comparison $w(x,t+t_0)\ge u(x,t)$. Since $t_0\to\infty$ as $\alpha\to\beta_* L^2$ we get that
$$
\lim_{t\to\infty} \frac{\log u(x,t)}{t}\le \beta_* L^2.
$$

On the other hand, if  $\alpha<\beta_* L^2$ we can take $K=\sqrt{\alpha/\beta_*}<L$  and $t_0>0$  large enough in such a way that $w$ is a subsolution to our equation for $t\ge t_0$, which is moreover compactly supported. Now we use the fact that our solution $u$ has global grow-up to obtain that there exists $t_1>t_0$ such that $w(x,t_0)<u(x,t_1)$. The lower grow-up rate is obtained again by comparison.
\end{proof}

\section*{Appendix: uniqueness}\label{sect-uniqueness}\setcounter{equation}{0}

As we have said in the introduction, it is easy to obtain the existence of a local in time solution to problem~\eqref{eq.principal}, and the existence time can be prolonged whenever the solution is bounded. It is unique if the reaction is a Lipschitz function of $u$, that is, when $p\ge1$. Uniqueness in the case $p<1$ with global reaction, $L=\infty$, has been studied in~\cite{AguirreEscobedo} when $m=1$, and in~\cite{dePabloVazquez} when $m\ne1$. We characterize now the  uniqueness of solution when $L<\infty$  following those works.

To that purpose let $p<1$ be fixed. We observe that by approximation we can construct a minimal solution $\underline u$ and a maximal solution $\overline u$. By comparison with the flat supersolution given in \eqref{flat} those solutions are globally defined in time.

\begin{teo} Assume $u_0\not\equiv0$ and $0<p<1$.
  \begin{enumerate}
    \item If $u_0(x)\ge\delta>0$ for every $|x|\le L$ then $\underline u\equiv \overline u$, that is, the solution to problem~\eqref{eq.principal} is unique.
        \item Assume $m>1$; if $u_0(x)=0$ for every $|x|\le L'$, for some $L'>L$ then there exists a  time $t_0>0$ such that for every $|x|\le L$, $0<t<t_0$ it is $\underline u(x,t)=0<\overline u(x,t)$.
            \item Assume $m+p\ge2$; if there exist $x_0$ with $|x_0|<L$ and $\varepsilon>0$ small such that  $u_0(x)=0$ for $|x-x_0|<\varepsilon$ then there exists a time $t_0>0$ such that for every $|x-x_0|<\varepsilon/2$, $0<t<t_0$ it is $\underline u(x,t)=0<\overline u(x,t)$.
  \end{enumerate}
\end{teo}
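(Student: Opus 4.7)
The plan is to treat the three parts separately, adapting the approach of \cite{AguirreEscobedo,dePabloVazquez} to the localized setting.

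For part (a), the goal is to show that both $\underline u$ and $\overline u$ stay uniformly bounded away from zero on $\overline I$ for a short time, reducing matters to a Lipschitz reaction. Continuity of $u_0$ together with $u_0\ge\delta>0$ on $\overline I$ yield $u_0\ge\delta/2$ on an enlarged interval $[-L-\eta,L+\eta]$. Pick a smooth non-negative $\phi\le u_0$ with support in $(-L-\eta,L+\eta)$ and $\phi\equiv\delta/2$ on $\overline I$, and let $w$ solve the pure porous medium equation with $w(\cdot,0)=\phi$. Since $a(x)u^p\ge 0$, both $\underline u$ and $\overline u$ are supersolutions of the PME with data dominating $\phi$, so $\underline u,\overline u\ge w$ by PME comparison. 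Continuity of $w$ provides $\tau,\delta'>0$ with $w\ge\delta'$ on $\overline I\times[0,\tau]$. On this set the reaction $u\mapsto a(x)u^p$ is uniformly Lipschitz in $u$, and a standard $L^1$-contraction argument (multiply $\overline u-\underline u$ by the sign of its positive part and integrate, using Kato's inequality on the diffusion term and the Lipschitz bound on the reaction) yields $\underline u\equiv\overline u$ on $[0,\tau]$; iteration extends the equality to all time.

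For part (b), $m>1$ gives finite propagation speed for the PME. Approximate $\underline u$ from below by $u_\varepsilon$ solving the same equation with the truncated Lipschitz reaction $a(x)h_\varepsilon(u)$, where $h_\varepsilon$ vanishes on $[0,\varepsilon]$ and equals $u^p$ on $[2\varepsilon,\infty)$, so that $u_\varepsilon\nearrow\underline u$. As long as $u_\varepsilon<\varepsilon$ on a subdomain, $u_\varepsilon$ solves the pure PME there and is therefore bounded above by the PME solution with datum $u_0$. Since $u_0\equiv 0$ on $[-L',L']$, finite propagation gives $t_0>0$, independent of $\varepsilon$, for which the latter vanishes on $[-L,L]\times[0,t_0]$, forcing $u_\varepsilon\equiv 0$ there; passing to the limit yields $\underline u\equiv 0$ on this set. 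For $\overline u$, I compare with the ignition subsolution $W(x,t)=\phi(t)\mathds{1}_{(-L,L)}(x)$, where $\phi(t)=((1-p)t)^{1/(1-p)}$ solves $\phi'=\phi^p$, $\phi(0)=0$: on $(-L,L)$ one has $W_t=W^p$ and $(W^m)_{xx}=0$ classically, while the downward jumps at $x=\pm L$ contribute a non-positive Dirac mass to $(W^m)_{xx}$, so $W$ is a distributional subsolution. Comparison yields $\overline u(x,t)\ge\phi(t)>0$ on $(-L,L)$ for every $t>0$.

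For part (c), the condition $m+p\ge 2$ forces $m>1$ and is exactly the exponent balance needed for a waiting-time barrier. I construct a local supersolution of the form
$$
\Phi(x,t)=c\bigl(|x-x_0|-\tfrac{\varepsilon}{2}+At^\beta\bigr)_+^{1/(m-1)}
$$
on the cylinder $Q=\{|x-x_0|<\varepsilon\}\times[0,t_0]$. The choice $\gamma=1/(m-1)$ makes the spatial powers arising from $\Phi_t$ and $(\Phi^m)_{xx}$ coincide; the reaction contributes a power $\gamma p$, and the supersolution inequality $\Phi_t\ge(\Phi^m)_{xx}+\Phi^p$ reduces to $\gamma p\ge\gamma-1$, i.e.\ $p\ge 2-m$, which is precisely the hypothesis. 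A direct calculation fixes admissible $c,A,\beta$; taking $c$ large compared to $\|u_0\|_\infty$ and $t_0$ small guarantees $\Phi\ge u_0$ at $t=0$ (trivially inside $\{|x-x_0|<\varepsilon\}$ where $u_0=0$, and by size elsewhere) and that $\Phi$ dominates the approximating sequence $u_\varepsilon$ on the lateral boundary $|x-x_0|=\varepsilon$ throughout $[0,t_0]$. Comparison on $Q$ then gives $\underline u\le\Phi$, which vanishes on $|x-x_0|\le\varepsilon/2-At^\beta$, hence on $|x-x_0|\le\varepsilon/2$ for $t_0$ small enough. For the strict inequality $\overline u>0$, I localize the ignition subsolution of part (b) to an interval $(x_0-\eta,x_0+\eta)\subset I\cap\{|x-x_0|<\varepsilon/2\}$, which exists since $|x_0|<L$.

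The main obstacle is the waiting-time construction in part (c): the comparison on the lateral boundary $|x-x_0|=\varepsilon$, where $\underline u$ is a priori of unknown size, will likely have to be carried out at the level of the approximations $u_\varepsilon$, for which uniform bounds are available, and then passed to the limit. The distributional subsolution property of $W$ in part (b) also deserves a careful smoothing justification, but this is standard.
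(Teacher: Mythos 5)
The one step that genuinely fails is the ``ignition subsolution'' $W(x,t)=\phi(t)\,\mathds{1}_{(-L,L)}(x)$ used to get $\overline u>0$ in part (b) (and reused in part (c)). A jump discontinuity of $W^m$ at $x=\pm L$ does \emph{not} produce a signed Dirac mass in $(W^m)_{xx}$: it produces a derivative of a Dirac mass (a dipole), which has no sign as a distribution. Concretely, testing against $\zeta\ge0$ gives $\iint W^m\zeta_{xx}=\int\phi^m(t)\bigl(\zeta_x(L,t)-\zeta_x(-L,t)\bigr)\,dt$, which can have either sign, so the weak subsolution inequality is simply not available. Worse, the conclusion this barrier would yield is itself false near the endpoints: the bound $\overline u(x,t)\ge\bigl((1-p)t\bigr)^{1/(1-p)}$ uniformly up to $|x|=L$ cannot hold, since diffusion drains the reaction term's output across $x=\pm L$ and the solution there lags strictly behind the ODE rate. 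The repair is to use a \emph{continuous} profile vanishing at $\pm L$, e.g.\ $w(x,t)=ct^{\alpha}\varphi_1(x)$ with $\varphi_1$ the first Dirichlet eigenfunction of $I$ extended by zero (this is what the paper does): the corner at $\pm L$ is convex, so it contributes a \emph{non-negative} Dirac mass to $(w^m)_{xx}$, which only helps the subsolution inequality, and one checks $c t^{\alpha}$ with $\alpha=1/(1-p)$ and $c$ small beats $w_t\le (w^m)_{xx}+w^p$ in the interior. This gives positivity of $\overline u$ on compact subsets of $I$, which is what is needed; your flat $W$ gives neither a valid comparison nor a true statement.

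The rest is sound and close in spirit to the paper's (much terser) argument. Part (a) is a correctly fleshed-out version of ``the reaction is Lipschitz where the data are nondegenerate''; your lower PME barrier plus $L^1$-contraction works. The treatment of $\underline u$ in (b) via Lipschitz truncations and finite propagation of the PME is exactly the paper's mechanism. In (c) the paper simply cites finite speed of propagation of the minimal solution for $m+p\ge2$ from \cite{dePabloVazquez}; your explicit waiting-time barrier $c\bigl(|x-x_0|-\varepsilon/2+At^{\beta}\bigr)_+^{1/(m-1)}$, with the exponent balance $\gamma p\ge\gamma-1\Leftrightarrow p\ge 2-m$, is a legitimate self-contained substitute, provided (as you note) the lateral comparison is run on the Lipschitz approximations $u_\varepsilon$ and the constants are actually exhibited. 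Just make sure the positivity of $\overline u$ in (c) is obtained with the corrected eigenfunction-type subsolution on a subinterval of $I\cap\{|x-x_0|<\varepsilon/2\}$, not with the indicator one.
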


\begin{proof}
  \begin{enumerate}
    \item Uniqueness in the nondegenerate case is trivial since the reaction is Lipschitz.
    \item Using as a subsolution $w(x,t)=ct^\alpha\varphi_1(x)$, where $\varphi_1$ is the first eigenfunction of the Laplacian in $I$ we see that the maximal solution is positive  in $I$ for every positive time. On the other hand, as the Porous Medium Equation possess the finite propagation property, it takes a positive time for the solution to get from $I_{L'}^c$ to $I$.
    \item Using \cite[Lemma 4.1]{dePabloVazquez}, the minimal solution has also finite speed of propagation provided $m+p\ge2$.
  \end{enumerate}
\end{proof}

It would be interesting to prove uniqueness if $\text{supp}(u_0)\equiv I$ or if $\text{supp}(u_0)\cap I\neq\emptyset$ in  the case $m+p<2$ .

\section*{Acknowledgments}

Work supported by the Spanish project  MTM2014-53037-P.

\end{document}